\numberwithin{equation}{section}
\newtheorem{theorem}[equation]{Theorem}
\newtheorem{lemma}[equation]{Lemma}
\theoremstyle{definition}
\newtheorem{remark}[equation]{Remark}
\newtheorem{assumption}[equation]{Assumption}
\newcommand{\dt}{{\Delta t}}
\newcommand{\Plh}{I_H}
\newcommand{\R}{\mathbb{R}}
\begin{document}
	
\begin{frontmatter}

\title{Continuous Data Assimilation Reduced Order Models of Fluid Flow}

\author{Camille Zerfas\fnref{fn1}}
\ead{czerfas@clemson.edu}
\address{Department of Mathematical Sciences, Clemson University, Clemson, SC, 29634}
\author{Leo G. Rebholz\fnref{fn1}}
\ead{rebholz@clemson.edu}
\address{Department of Mathematical Sciences, Clemson University, Clemson, SC, 29634}
\author{Michael Schneier\corref{cor1}}
\ead{mhs64@pitt.edu}
\address{Department of Mathematics, University of Pittsburgh, Pittsburgh, PA, 15213}
\author{Traian Iliescu\fnref{fn2}}
\ead{iliescu@vt.edu}
\address{Department of Mathematics, Virginia Tech, Blacksburg, VA, 24061}

\cortext[cor1]{Corresponding author}
\fntext[fn1]{This author was partially supported by NSF Grant DMS 1522191.}
\fntext[fn2]{This author was partially supported by NSF Grant DMS 1821145.}



\date{}

\begin{abstract}
We propose, analyze, and test a novel continuous data assimilation reduced order model (DA-ROM) for simulating incompressible flows.  While ROMs have a long history of success on certain problems with recurring dominant structures, they tend to lose accuracy on more complicated problems and over longer time intervals.  Meanwhile, continuous data assimilation (DA) has recently been used to improve accuracy and, in particular, long time accuracy in fluid simulations by incorporating measurement data into the simulation.  This paper synthesizes these two ideas, in an attempt to address inaccuracies in ROM by applying DA, especially over long time intervals and when only inaccurate snapshots are available.  We prove that with a properly chosen nudging parameter, the proposed DA-ROM algorithm converges exponentially fast in time to the true solution, up to discretization and ROM truncation errors.  Finally, we propose a strategy for nudging adaptively in time, by adjusting dissipation arising from the nudging term to better match true solution energy.  Numerical tests confirm all results, and show that the DA-ROM strategy with adaptive nudging can be highly effective at providing long time accuracy in ROMs.
\end{abstract}

\begin{keyword}
Navier-Stokes equations, proper orthogonal decomposition, data assimilation, reduced order modeling.
	
\end{keyword}

\end{frontmatter}


\section{Introduction}


{\it Reduced order models (ROMs)} for fluids dominated by relatively few recurrent spatial structures are generally built as follows~\cite{hesthaven2015certified,HLB96,quarteroni2015reduced}: 
(i) postulate a collection of snapshots, either from numerical experiments or from physical data;
(ii) from those snapshots, select a small number (e.g., $10$) of ROM basis functions; 
(iii) project the equations of motion into this basis; and 
(iv) advance the velocity in time to interrogate flows different from the one generating the snapshots. 
ROMs have been explored for decades~\cite{HLB96}.
When successful, ROMs can decrease the computational cost of a brute force, direct numerical simulation (DNS) by orders of magnitude.

One of the main roadblocks for ROMs of realistic flows is their {\it lack of accuracy}, e.g., in complex problems, for long time intervals, or when a low-dimensional ROM basis is used.
To increase the ROM accuracy in practical applications, several approaches are currently used.
We list some of these below:

{\it (i) Closure Modeling}: \ 
To model the effect of the discarded ROM modes, a {\it Correction} term is generally added to the standard ROM~\cite{benosman2017learning,osth2014need,protas2015optimal,san2015stabilized}.
Given the drastic truncation used in ROMs for realistic flows, the Correction term is essential for accuracy.

{\it (ii) Numerical Stabilization}: \  
To eliminate/alleviate the spurious numerical oscillations generated when ROMs are used for convection-dominated flows, numerical stabilization techniques can be used~\cite{carlberg2017galerkin,gunzburger2018leray,san2015stabilized}.

{\it (iii) Data-Driven Modeling}: \ 
Recently, available numerical or experimental data have been used to construct ROM operators~\cite{peherstorfer2016data} or to determine the unknown coefficients in classical ROM operators~\cite{benosman2017learning,galletti2004low,protas2015optimal}.
 
{\it (iv) Improved Basis}: \  
Another approach for increasing the ROM accuracy in practical applications is the construction of an improved (more accurate) ROM basis that better captures the behavior of the underlying system~\cite{balajewicz2016minimal,rowley2004model,san2015principal,xie2018lagrangian}.

{\it (v) Physical Accuracy}: \  
To develop physically sound ROMs, recent effort has been directed at ensuring that the ROMs {\it  satisfy the same physical balances/conservation laws} as those satisfied by the equations of motion~\cite{mohebujjaman2017energy,mohebujjaman2019physically}.

In this paper, we propose a new approach to increase the ROM accuracy.
Specifically, we use {\it data assimilation (DA)} to develop a novel DA-ROM.
In weather modeling, climate science, and hydrological and environmental forecasting, DA has been used for decades to incorporate observational data in simulations, in order to increase the accuracy of solutions and to obtain better estimates of initial conditions~\cite{kalnay2003atmospheric}.
In this paper, we use DA to improve the ROM accuracy.
Specifically, we add to the standard ROM a feedback control term of the form
\begin{eqnarray}
	\mu \, I_{H} (u_{r} - u_{obs}) ,
	\label{eqn:nudging-term}
\end{eqnarray}
which nudges the ROM approximation ($u_{r}$) towards the reference solution ($u_{obs}$) corresponding to the observed data.
In~\eqref{eqn:nudging-term}, $I_{H}$ is an interpolation operator onto a coarser mesh of size $H$ and $\mu > 0$ is a nudging parameter.
Equation~\eqref{eqn:nudging-term} allows the {\it simple} implementation of DA into existing ROM codes.
The nudging term~\eqref{eqn:nudging-term} increases the accuracy of the new DA-ROM by utilizing the available low-resolution data, without the need to increase the number of ROM basis functions. This type of data assimilation has recently become popular due to a seminal paper of Azouani et al. \cite{Azouani_Olson_Titi_2014}, and since then it has been used to improve solutions to many different types of evolutionary systems \cite{Bessaih_Olson_Titi_2015,Biswas_Martinez_2017,Farhat_Jolly_Titi_2015,Farhat_Lunasin_Titi_2016abridged,Foias_Mondaini_Titi_2016,Jolly_Martinez_Titi_2017,Larios_Pei_2017_KSE_DA_NL,Markowich_Titi_Trabelsi_2016}.

We emphasize that our new DA-ROM is different from other uses of DA for ROMs, e.g.,~\cite{cao2007reduced,kaercher2018reduced,maday2015parameterized,stefanescu2015pod}:
In the latter the authors use ROMs to {\it speed up classical DA algorithms} (e.g., 4D-VAR), whereas in the DA-ROM proposed in this paper, we use DA to {\it improve the ROM accuracy}. 

The rest of the paper is organized as follows:
In Section~\ref{sec:notation}, we introduce some notation and preliminaries necessary for our analysis.
In Section~\ref{sec:da-rom-analysis}, we construct the new DA-ROM and perform a careful error analysis.
In Section~\ref{sec:numerical-results}, we perform a numerical investigation of the new DA-ROM in the numerical simulation of a 2D flow past a circular cylinder and  discuss implementation of the DA-ROM algorithm with an adaptive nudging parameter, which can be used to further improve the accuracy of solutions. 
Finally, in Section~\ref{sec:conclusions}, we draw conclusions and outline future research directions.

\section{Notation and Preliminaries}
	\label{sec:notation}

Let  $\Omega \subset \R^d$, $d$=2 or 3, be a bounded open domain. The $L^2(\Omega)$ norm and inner product will be denoted by $\| \cdot \|$ and $(\cdot, \cdot)$, respectively, and all other norms will be appropriately labeled with subscripts. 

We consider the {\it Navier-Stokes equations (NSE)} with no-slip boundary conditions:
\begin{equation}\label{eqn:nse-1}
\begin{aligned}
&u_t + u\cdot\nabla u + \nabla p - \nu\Delta u =  f,\ \text{and } \nabla \cdot u =  0,\  \text{in} \ \Omega \times (0,T]   \\
&u   =  0, \ \text{on} \ \partial\Omega \times (0,T], \ \text{and } u(x,0)  =  u_0(x), \ \text{in} \ \Omega.  \\
\end{aligned}
\end{equation}
Here $u$ is the velocity, $f = f(x,t)$ is the known body force, $p$ is the pressure, and $\nu$ is the kinematic viscosity.

We denote the natural velocity space by $X=H^1_0(\Omega)$ and pressure space by $Q = L_0^2(\Omega)$ and by $(X_{h}, Q_h) \subset (X,Q)$, corresponding inf-sup stable finite element spaces. Additionally, we define the  discretely divergence-free space $V_h$ as
$$
V_{h} :=\{v_{h}\in X_{h}\,:\,(\nabla\cdot v_{h},q_{h})=0\,\,\forall
q_{h}\in Q_h\}  \subset X.
$$  The Poincar\'e inequality will be used throughout this paper: there exists a constant $C_P$ depending only on $\Omega$ such that 
\[
\| \phi \| \le C_P \| \nabla \phi \| \ \forall \phi \in X.
\]
We define the trilinear form
$$
b(w,u,v) = (w\cdot\nabla u,v) 
\qquad\forall u,v,w\in X,
$$
and the explicitly skew-symmetric trilinear form given by 
$$
b^{\ast}(w,u,v):=\frac{1}{2}(w\cdot\nabla u,v)-\frac{1}{2}(w\cdot\nabla v,u)
\qquad\forall u,v,w\in X \, .
$$
An important property of the $b^{\ast}$ operator is that $b^{\ast}(u,v,v)=0$ for $u,v\in X$. We will utilize the following bounds on the operator $b^*$ \cite{layton2008introduction}.
\begin{lemma}\label{bbounds}
	There exists a constant $M>0$ dependent only on $\Omega$ satisfying
	\begin{align*}
	|b^*(u,v,w)| & \le M \| \nabla u \| \| \nabla v \| \| \nabla w\|, \\
	|b^*(u,v,w)| & \le M \| u \|^{1/2} \| \nabla u \|^{1/2} \| \nabla v \| \| \nabla w\|, \\
	|b^*(u,v,w)| & \le M \| u \| (\| \nabla v \|_{L^3} + \| v \|_{L^{\infty}} ) \| \nabla w\|,
	\end{align*}
	for all $u,v,w\in X$ for which the norms on the right hand sides are finite.
\end{lemma}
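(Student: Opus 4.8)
The plan is to reduce all three inequalities to a single mechanism: rewrite $b^*$ using its definition, apply Hölder's inequality termwise, and then convert the resulting $L^p$ norms into $L^2$ and gradient norms via the Sobolev embedding $X \hookrightarrow L^p(\Omega)$ (valid for the relevant $p$ when $d=2,3$), the Poincaré inequality, and Ladyzhenskaya-type interpolation inequalities. Relabeling the arguments in the definition gives $b^*(u,v,w) = \tfrac12 (u\cdot\nabla v, w) - \tfrac12 (u\cdot\nabla w, v)$, so it suffices to estimate each of the two integrals $(u\cdot\nabla v, w)$ and $(u\cdot\nabla w, v)$ and sum, absorbing the factor $\tfrac12$ into $M$.

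For the first bound I would place the gradient factor in $L^2$ and the other two factors in $L^4$: Hölder with exponents $(4,2,4)$ yields $|(u\cdot\nabla v, w)| \le \|u\|_{L^4}\|\nabla v\|\|w\|_{L^4}$, and since $X \hookrightarrow L^4(\Omega)$ for $d \le 4$ together with Poincaré gives $\|u\|_{L^4} \le C\|\nabla u\|$ and likewise for $w$, the product is controlled by $\|\nabla u\|\|\nabla v\|\|\nabla w\|$; the term $(u\cdot\nabla w, v)$ is handled identically. For the third bound I would instead use the Hölder split $(2,3,6)$ on the first integral, $|(u\cdot\nabla v, w)| \le \|u\|\,\|\nabla v\|_{L^3}\,\|w\|_{L^6}$, and bound $\|w\|_{L^6} \le C\|\nabla w\|$ by the embedding $X \hookrightarrow L^6(\Omega)$ (which holds for $d \le 3$); the second integral is split $(2,2,\infty)$ as $\|u\|\,\|\nabla w\|\,\|v\|_{L^\infty}$, and adding the two pieces produces exactly $\|u\|(\|\nabla v\|_{L^3} + \|v\|_{L^\infty})\|\nabla w\|$.

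The second bound, with its interpolated factor $\|u\|^{1/2}\|\nabla u\|^{1/2}$, is where the dimension dependence becomes the main obstacle and must be handled carefully. In $d=2$ the clean route is the Ladyzhenskaya inequality $\|u\|_{L^4} \le C\|u\|^{1/2}\|\nabla u\|^{1/2}$ applied in the $(4,2,4)$ split, with the remaining $L^4$ factor bounded by a gradient as above. In $d=3$ this exact exponent pair is no longer produced by the $L^4$ embedding (Gagliardo--Nirenberg would give $\|u\|^{1/4}\|\nabla u\|^{3/4}$), so I would reroute through the Hölder split $(3,2,6)$, using the three-dimensional interpolation $\|u\|_{L^3} \le C\|u\|^{1/2}\|\nabla u\|^{1/2}$ (from $\|u\|_{L^3} \le \|u\|^{1/2}\|u\|_{L^6}^{1/2}$ and $X \hookrightarrow L^6$) together with $\|w\|_{L^6} \le C\|\nabla w\|$. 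Thus the stated form holds in both dimensions, but only after choosing the Hölder exponents in a $d$-dependent way; verifying that a single constant $M$ can be taken to cover both cases (which is why the lemma cites the standard reference) is the one point requiring genuine care, the remaining steps being routine.
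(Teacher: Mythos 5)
Your proof is correct and is essentially the standard argument behind the paper's treatment of this lemma, which offers no proof of its own but defers to the cited reference \cite{layton2008introduction}: Hölder splits of the two integrals in $b^*$, combined with the embeddings $H^1_0 \hookrightarrow L^4$ and $H^1_0 \hookrightarrow L^6$, Poincaré, and Ladyzhenskaya-type interpolation. One small simplification: your $d=3$ route for the second bound --- the $(3,2,6)$ Hölder split with $\|u\|_{L^3} \le \|u\|^{1/2}\|u\|_{L^6}^{1/2}$ and $\|w\|_{L^6} \le C\|\nabla w\|$ --- works verbatim in $d=2$ as well (where $H^1_0 \hookrightarrow L^6$ also holds), so the dimension-dependent case split is unnecessary, and the worry about a single constant $M$ is moot since fixing $\Omega$ fixes the dimension.
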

The following lemma is proven in \cite{LRZ18}, and is useful in our analysis.

\begin{lemma}\label{geoseries}
	Suppose constants $r$ and $B$ satisfy $r>1$, $B\ge 0$.  Then if the sequence of real numbers $\{a_n\}$ satisfies 
	\begin{align}
	ra_{n+1} \le a_n + B, \label{sequence}
	\end{align}
	we have that
	\[
	a_{n+1} \le a_0\left(\frac{1}{r}\right)^{n+1}  + \frac{B }{r-1}.
	\]
\end{lemma}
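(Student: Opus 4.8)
The plan is to recast the hypothesis as an explicit one-step contraction and then iterate it down to $a_0$, summing the resulting geometric tail. Since $r > 1$ (in particular $r > 0$), dividing \eqref{sequence} by $r$ yields the equivalent bound
\[
a_{n+1} \le \frac{1}{r}\, a_n + \frac{B}{r},
\]
and because $1/r > 0$ this division preserves the direction of the inequality.

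First I would apply this bound repeatedly. Substituting the estimate for $a_n$ into the right-hand side, then the estimate for $a_{n-1}$, and so on down to $a_0$, each substitution multiplies the previous bound by the nonnegative factor $1/r$ (so the inequality direction is maintained) and accrues one more copy of $B/r$. After $n+1$ such steps this gives
\[
a_{n+1} \le \left(\frac{1}{r}\right)^{n+1} a_0 + \frac{B}{r}\sum_{k=0}^{n} \left(\frac{1}{r}\right)^{k} .
\]
Equivalently, one can establish exactly this intermediate bound by induction on $n$, with the base case $n=0$ following directly from \eqref{sequence}.

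To finish, I would bound the finite geometric sum by its infinite limit. Since $0 < 1/r < 1$ and $B \ge 0$, every discarded term is nonnegative, so
\[
\frac{B}{r}\sum_{k=0}^{n} \left(\frac{1}{r}\right)^{k} \le \frac{B}{r}\sum_{k=0}^{\infty} \left(\frac{1}{r}\right)^{k} = \frac{B}{r}\cdot\frac{1}{1 - 1/r} = \frac{B}{r-1} .
\]
Combining this with the previous display produces the claimed estimate $a_{n+1} \le a_0 (1/r)^{n+1} + B/(r-1)$.

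There is no serious obstacle here; the result is elementary, and the argument is essentially that of a discrete Gr\"onwall inequality. The only points requiring care are bookkeeping ones: ensuring the inequality direction survives each iteration (guaranteed by $1/r > 0$), and justifying the replacement of the partial geometric sum by the full convergent series (guaranteed by $B \ge 0$, which makes the discarded tail nonnegative). Thus both hypotheses, $r > 1$ and $B \ge 0$, are used precisely in these two places.
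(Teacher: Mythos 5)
Your proof is correct, and it is the standard argument: the paper itself gives no proof of this lemma (it simply cites \cite{LRZ18}), where the result is established in essentially the same way you do it---dividing by $r$, iterating (equivalently, inducting) down to $a_0$, and bounding the resulting partial geometric sum by the full series $\frac{B}{r}\cdot\frac{1}{1-1/r}=\frac{B}{r-1}$, using $B\ge 0$ to discard the nonnegative tail. Nothing further is needed.
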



\subsection{ROM preliminaries}

Let $\{ u_h^1,..., u_h^M  \}$ be snapshots of FE solutions at $M$ different time instances. The proper orthogonal decomposition seeks a low-dimensional basis that approximates these snapshots optimally with respect to a certain norm; in this paper, we use the $L^2$ norm. This minimization can be set up as an eigenvalue problem $YY^TM_h \varphi_j = \lambda_j\varphi_j$, $j = 1,...,N_h$. where $N_{h}$ is the dimension of the finite element space. The eigenvalues are real and non-negative, so they can be ordered as $\lambda_1\geq ...\geq \lambda_d \geq \lambda_{d+1} =... = \lambda_{N_h} = 0$, where $d$ is the rank of the snapshot matrix. We take the ROM space to be $X_r :=\text{span} \{ \varphi_i  \}_{i = 1}^r$, and note that $X_r \subset V_{h}$. The ROM approximation of the velocity is defined as 
\[ u_r(x,t) = \sum_{j = 1}^{r}a_j(t) \varphi_j(x),  \]
where the coefficients $a_j(t)$ are determined by solving the Galerkin ROM:
\[  (u_{r,t} , \varphi_i) + \nu (\nabla u_r, \nabla \varphi_i) + b^*(u_r, u_r, \varphi_i) = (f, \varphi_i).  \]

We define the $L^2$ ROM projection $P_r : L^2 \to X_r$ by: for all $v \in L^2(\Omega)$, $P_r(v)$ is the unique element of $X_r$ such that 
\begin{equation}\label{ROM-projection}
(P_r(v), v_r) = (v,v_r) \ \ \  \forall \ v_r \in X_r.  \end{equation}
In addition, the following inverse inequality holds for our ROM basis \cite{KV01}.
\begin{lemma}[POD inverse estimate]
	\begin{equation}\label{POD:inveq}
	\|\nabla \varphi \| \leq |||\mathbb{S}_{R}|||_{2}^{1/2}\|\varphi\| \ \ \ \forall \varphi \in X_{r},
	\end{equation}
	where $|||\mathbb{S}_R|||$ is the matrix 2-norm of the ROM stiffness matrix, as in Lemma 3.1 of \cite{ iliescu2014variational}.
\end{lemma}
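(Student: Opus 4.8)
The plan is to reduce the estimate to a standard Rayleigh-quotient bound on the vector of POD coefficients, exploiting the $L^2$-orthonormality of the POD modes. Since any $\varphi \in X_r$ is a finite linear combination of the basis, the whole statement is really a finite-dimensional linear-algebra identity dressed in function-space notation, so the work is in setting up the correct correspondence between the two.

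First I would expand an arbitrary $\varphi \in X_r$ in the POD basis as $\varphi = \sum_{j=1}^r c_j \varphi_j$ and collect the coefficients into the vector $c = (c_1, \dots, c_r)^T \in \R^r$. Using that the POD modes are orthonormal in $L^2(\Omega)$ --- precisely the normalization built into the eigenvalue problem $YY^T M_h \varphi_j = \lambda_j \varphi_j$, so that $(\varphi_i,\varphi_j) = \delta_{ij}$ --- bilinearity of the $L^2$ inner product gives
\[
\|\varphi\|^2 = \sum_{i,j=1}^r c_i c_j (\varphi_i, \varphi_j) = \sum_{j=1}^r c_j^2 = |c|^2 ,
\]
where $|\cdot|$ denotes the Euclidean norm on $\R^r$. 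In the same way, recalling that the ROM stiffness matrix has entries $(\mathbb{S}_R)_{ij} = (\nabla \varphi_j, \nabla \varphi_i)$, bilinearity of the gradient inner product yields
\[
\|\nabla \varphi\|^2 = \sum_{i,j=1}^r c_i c_j (\nabla \varphi_i, \nabla \varphi_j) = c^T \mathbb{S}_R \, c .
\]

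Finally, since $\mathbb{S}_R$ is symmetric and positive semidefinite, its matrix $2$-norm coincides with its largest eigenvalue, and the Rayleigh-quotient inequality gives $c^T \mathbb{S}_R c \le |||\mathbb{S}_R|||_2 \, |c|^2$. Chaining the three relations produces $\|\nabla \varphi\|^2 \le |||\mathbb{S}_R|||_2 \, \|\varphi\|^2$, and taking square roots completes the argument.

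I do not expect a genuine obstacle here: the only point requiring care is the normalization convention, namely that the modes $\varphi_j$ are \emph{$L^2$-orthonormal} so that the mass matrix drops out and $\|\varphi\|$ reduces to the plain Euclidean norm of the coefficient vector. Were the modes instead normalized in the mass-matrix inner product, a factor involving $M_h$ would reappear and the constant in the bound would change accordingly; confirming the orthonormality convention used in the POD construction above is therefore the single thing I would verify before asserting the clean inequality.
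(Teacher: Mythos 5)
Your proof is correct and is exactly the standard argument behind this lemma, which the paper does not prove itself but imports from \cite{KV01} and Lemma 3.1 of \cite{iliescu2014variational}: expand in the POD basis, use $L^2$-orthonormality to identify $\|\varphi\|$ with the Euclidean norm of the coefficient vector, and bound the Rayleigh quotient $c^T\mathbb{S}_R c$ by $|||\mathbb{S}_R|||_2\,|c|^2$. Your one caveat is also resolved in your favor: the paper's POD construction explicitly optimizes in the $L^2$ norm, so the modes are $L^2$-orthonormal and the clean, constant-free inequality holds as stated.
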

In order to establish an error estimate for the ROM projection, we first make the following assumption on the finite element error:  

\begin{assumption}\label{assump-FE}
	Let $C(\nu,p)$ denote a constant which is dependent upon the viscosity and pressure. We assume that the finite element error $u_h$ satisfies the following error estimate
	\begin{equation}
	\begin{aligned}
	\|u^{M} - u^{M}_{h} \|^{2} + \nu h^2\dt \sum_{n=1}^{M} \|\nabla(u^{n}-u^{n}_{h})\|^{2} &\leq C(\nu,p)(h^{2k + 2} + \Delta t^{4}).
	\end{aligned}
	\end{equation}
\end{assumption}

\begin{remark}
	Error estimates of this form have been proven for varying amounts of regularity on the continuous solution $u$ and $p$. Some examples include the scheme used in the numerical experiments in Section 4.
\end{remark}

Using Assumption \ref{assump-FE} the following error estimates for the ROM projection can be proven \cite{iliescu2014variational}:
\begin{lemma} \label{ROMlemma}
	The $L^2$ ROM projection of $u^n$ satisfies the following error estimates:
	\begin{align}
	\sum_{n=1}^{M}\|u^n - P_r(u^n)\|^2 &\leq C(\nu,p)\bigg( h^{2k+ 2} + \dt ^4 + \sum_{j = r+1}^{d} \lambda_j\bigg), \label{projerr1}
	\\ \sum_{n=1}^{M} \|\nabla (u^n - P_r(u^n))\|^2 &\leq {C(\nu,p)}\bigg(h^{2k} + |||{\mathbb S}_R|||_{2} h^{2k + 2}   + (1+|||{\mathbb S}_R|||_{2})\Delta t^4 \nonumber 
	\\ & \hspace{4cm} + \sum_{j=r+1}^{d} \|  \nabla {\varphi}_j\| ^2\lambda_j\bigg). \label{projerr2}
	\end{align}
\end{lemma}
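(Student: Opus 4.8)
The plan is to prove both bounds by inserting the finite element solution $u_h^n$ as an intermediate quantity and then exploiting that $P_r$ is the $L^2$-orthogonal projection onto $X_r \subset V_h$ defined in \eqref{ROM-projection}. Since $P_r$ is linear, I would first record the splitting
\[
u^n - P_r(u^n) = (I - P_r)(u^n - u_h^n) + \left( u_h^n - P_r(u_h^n) \right),
\]
which cleanly separates the finite element discretization error (first term) from the POD truncation error (second term). Everything downstream is an application of one estimate to each piece.

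For the $L^2$ bound \eqref{projerr1}, I would square this identity, apply Young's inequality, and sum over $n = 1, \dots, M$. Because $P_r$ is an $L^2$-orthogonal projection it is a contraction in the $L^2$ norm, so $\| (I - P_r)(u^n - u_h^n) \| \le \| u^n - u_h^n \|$, and Assumption \ref{assump-FE} then controls $\sum_n \| u^n - u_h^n \|^2$ by $C(\nu,p)(h^{2k+2} + \dt^4)$, with the final-time constant applied at each step and the resulting time-horizon factor absorbed into $C(\nu,p)$. For the truncation term I would invoke the defining optimality property of the $L^2$ POD basis, $\sum_{n=1}^M \| u_h^n - P_r(u_h^n) \|^2 = \sum_{j=r+1}^d \lambda_j$, which is immediate from the eigenvalue problem $YY^TM_h\varphi_j = \lambda_j\varphi_j$ and the $L^2$-orthonormality of the modes $\varphi_j$. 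Summing the two contributions yields \eqref{projerr1}.

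The gradient estimate \eqref{projerr2} is where the main difficulty lies, because $P_r$ is stable only in $L^2$ and not in $H^1$, so $(I-P_r)$ is no longer a contraction in the gradient norm $\|\nabla\cdot\|$. Applying $\nabla$ to the same splitting and using Young's inequality, the truncation piece $\sum_n \| \nabla(u_h^n - P_r(u_h^n)) \|^2$ equals $\sum_{j=r+1}^d \| \nabla \varphi_j \|^2 \lambda_j$ by the gradient-weighted POD identity (derived exactly as above, but carrying the factor $\| \nabla \varphi_j \|^2$). To control $\| \nabla (I - P_r)(u^n - u_h^n) \|$ I would split it by the triangle inequality into $\| \nabla(u^n - u_h^n) \|$, which the $H^1$ part of Assumption \ref{assump-FE} bounds and which produces the $h^{2k}$ term and part of the $\dt^4$ contribution, and $\| \nabla P_r(u^n - u_h^n) \|$, the genuinely new term. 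For the latter I would use the POD inverse estimate \eqref{POD:inveq} together with $L^2$-stability of $P_r$ to get
\[
\| \nabla P_r(u^n - u_h^n) \| \le |||{\mathbb S}_R|||_2^{1/2} \, \| P_r(u^n - u_h^n) \| \le |||{\mathbb S}_R|||_2^{1/2} \, \| u^n - u_h^n \|,
\]
so that squaring, summing, and again applying Assumption \ref{assump-FE} produces precisely the $|||{\mathbb S}_R|||_2 \, h^{2k+2}$ term and the remaining $|||{\mathbb S}_R|||_2 \, \dt^4$ contribution. Collecting all pieces gives \eqref{projerr2}.

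The main obstacle is exactly this $H^1$ bound: the loss of $H^1$-stability of the $L^2$ projection forces the use of the inverse inequality \eqref{POD:inveq}, which is the origin of every $|||{\mathbb S}_R|||_2$-dependent term on the right-hand side and must be tracked carefully to reproduce the stated estimate. Once the splitting above is in place, the bookkeeping of $C(\nu,p)$ and of the time-horizon dependence is routine, and the argument is essentially that of \cite{iliescu2014variational} transcribed into the present notation.
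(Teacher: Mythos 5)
Your argument is correct and is essentially the proof the paper relies on: the paper does not prove Lemma \ref{ROMlemma} itself but cites \cite{iliescu2014variational}, whose argument uses precisely your splitting through the finite element solution $u_h^n$, the $L^2$-optimality of $P_r$ together with the POD eigenvalue identities $\sum_{n=1}^{M}\|u_h^n-P_r(u_h^n)\|^2=\sum_{j=r+1}^{d}\lambda_j$ and its gradient-weighted analogue, and the inverse estimate \eqref{POD:inveq} combined with $L^2$-stability of $P_r$ to bound $\|\nabla P_r(u^n-u_h^n)\|$, which is exactly where each $|||\mathbb{S}_R|||_2$ term on the right-hand side originates. Your one hand-wave --- absorbing into $C(\nu,p)$ the time-horizon factor that arises from summing the final-time estimate of Assumption \ref{assump-FE} over $n$ --- is shared by the paper itself, which implicitly requires a per-time-step finite element bound (compare the remark following Assumption \ref{assumption:rom-projection}), so it is not a gap relative to the paper's own level of rigor.
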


We then make the following assumption similar to that made in \cite{iliescu2014variational}:

\begin{assumption}\label{assumption:rom-projection}
	The $L^2$ ROM projection of $u^n$ satisfies the following error estimates:
	\begin{align}
	\max_{n} \|u^n - P_r(u^n)\|^2 &\leq C(\nu,p) \bigg((h^{2k+ 2} + \dt ^4) + \sum_{j = r+1}^{d} \lambda_j\bigg), \label{assump_projerr1}
	\\ \max_{n} \|\nabla (u^n - P_r(u^n))\|^2 &\leq  {C(\nu,p)}\bigg(h^{2k} + |||{\mathbb S}_R|||_{2} h^{2k + 2}   + (1+|||{\mathbb S}_R|||_{2})\Delta t^4 \nonumber 
	\\ & \hspace{4cm} + \sum_{j=r+1}^{d} \|  \nabla {\varphi}_j\| ^2\lambda_j\bigg). \label{assump_projerr2}
	\end{align}
\end{assumption}

\begin{remark}
	If we assumed in Assumption \ref{assump-FE} that the finite element error satisfies 
	\begin{equation*}
	\|u^{M} - u^{M}_{h} \|^{2} + h^{2}\|\nabla(u^{n}-u^{n}_{h})\|^{2} \leq C(\nu,p)(h^{2k + 2} + \Delta t^{4}),
	\end{equation*}
	then the bound in Assumption \ref{assumption:rom-projection} would hold. Error estimates of this form have been proven for varying amounts of regularity on the continuous solution $u$ and $p$. Some examples include the incremental pressure correction schemes in \cite{G99} and chapter 7 of \cite{P97}.
\end{remark}

%


\subsection{Data assimilation preliminaries}

We consider $I_H$ to be an interpolation operator that satisfies: For a given mesh $\tau_H(\Omega)$ with $H\le 1$,
\begin{align}
\| I_H  (w) - w \| &\le C_{I} H \| \nabla w\|, \label{interp1}
\\ \| I_H(w) \| &\leq  C_{I}\|w \|, \label{interp2}
\end{align}
for any $w\in H^1(\Omega)$.  For example, this holds for the $L^2$ projection onto piecewise constants, and the Scott-Zhang interpolant.  For the (unknown) true solution $u$, $I_H(u)$ represents an approximation of what is observed of the true solution.  We assume in this paper that $I_H(u)$ can be observed at any time.  We remark that in the nudging used in this paper, we use the interpolant of the test function as well, as suggested in \cite{RX18}.  This does not affect the convergence analysis; some extra terms arise that can be handled without difficulty, but it allows for unconditional stability.

\section{Error Analysis}
\label{sec:da-rom-analysis}
For simplicity of exposition, our analysis considers a first order DA-ROM algorithm, which takes the following form: Find $u_r^{n+1} \in X_r$ such that for all $v_r \in X_r$, 
\begin{align}
\frac{1}{\dt} (u_r^{n+1} - u_r^n, v_r) + b^*(u_r^{n+1},& u_r^{n+1}, v_r) + \nu (\nabla u_r^{n+1}, \nabla v_r)     \nonumber 
\\ &  +  \mu (\Plh (u_r^{n+1} - u(t^{n+1})), \Plh v_r) = (f^{n+1}, v_r), \label{DAROMalg}
\end{align} 
for $n = 1,2,..., M$, with $v_0=P_r(u_0)$, and where $\mu\ge 0$ is the nudging parameter.  Extension to other time stepping methods is possible, and, for example, extension to BDF2 can be done following the usual techniques {\cite{LRZ18}.  All of our numerical tests use the analogous BDF2 algorithm.
	
	We first prove a stability estimate for the DA ROM algorithm. 
	\begin{lemma}\label{lemma:stability}
		The solutions to \eqref{DAROMalg} satisfy for all $M>1$, 
		\begin{align*}
		\|u_r^{M}\|^2 
		\leq \|u_r^0 \|^2 \left( \frac{1}{1 + \lambda \dt}  \right)^{M} + C \lambda^{-1}(\nu^{-1}F^2 + \mu U^2) := C_{data},
		\end{align*}
		where $F := \|f\|_{L^\infty(0,\infty; H^{-1})}$, $U := \|u\|_{L^\infty(0,\infty ; L^2)}$, and $\lambda = \nu C_P^{-2}$.
	\end{lemma}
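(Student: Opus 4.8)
The plan is to run a standard energy argument by testing \eqref{DAROMalg} with $v_r = u_r^{n+1}$ and then reducing the resulting inequality to the recursion format of Lemma~\ref{geoseries}. With this choice the nonlinear term drops out immediately by the skew-symmetry property $b^{\ast}(u_r^{n+1}, u_r^{n+1}, u_r^{n+1}) = 0$, the viscous term becomes $\nu \|\nabla u_r^{n+1}\|^2$, and I would rewrite the time-difference term via the polarization identity as $\frac{1}{2\dt}\bigl(\|u_r^{n+1}\|^2 - \|u_r^n\|^2 + \|u_r^{n+1}-u_r^n\|^2\bigr)$, discarding the nonnegative jump $\|u_r^{n+1}-u_r^n\|^2$.

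The delicate term is the nudging contribution. Expanding it gives $\mu\|\Plh u_r^{n+1}\|^2 - \mu(\Plh u(t^{n+1}), \Plh u_r^{n+1})$, where the first piece is nonnegative and I would keep it on the left-hand side. I would split the cross term by Young's inequality into $\frac{\mu}{2}\|\Plh u(t^{n+1})\|^2 + \frac{\mu}{2}\|\Plh u_r^{n+1}\|^2$, absorbing the second half into the retained $\mu\|\Plh u_r^{n+1}\|^2$, and then use the interpolation bound \eqref{interp2} with the definition of $U$ to control the data term by $\frac{\mu}{2}C_I^2 U^2$. This step—and specifically the fact that interpolating the test function produces the absorbing positive quantity $\mu\|\Plh u_r^{n+1}\|^2$—is what renders the estimate unconditional in $\dt$, as flagged in the remark following \eqref{interp2}; I regard this as the main, though mild, obstacle.

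For the forcing I would use the $H^{-1}$ duality bound $(f^{n+1}, u_r^{n+1}) \le F \|\nabla u_r^{n+1}\|$ followed by Young's inequality, peeling off $\frac{\nu}{2}\|\nabla u_r^{n+1}\|^2$ to be absorbed into the viscous term and leaving $\frac{1}{2\nu}F^2$. What survives on the left is then $\frac{1}{2\dt}\bigl(\|u_r^{n+1}\|^2 - \|u_r^n\|^2\bigr) + \frac{\nu}{2}\|\nabla u_r^{n+1}\|^2$, and I would convert the remaining dissipation using the Poincar\'e inequality $\|\nabla u_r^{n+1}\|^2 \ge C_P^{-2}\|u_r^{n+1}\|^2$ into $\frac{\lambda}{2}\|u_r^{n+1}\|^2$, with $\lambda = \nu C_P^{-2}$ exactly as defined in the statement.

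Multiplying through by $2\dt$ and rearranging yields $(1+\lambda\dt)\|u_r^{n+1}\|^2 \le \|u_r^n\|^2 + \dt\bigl(\nu^{-1}F^2 + \mu C_I^2 U^2\bigr)$, which is precisely \eqref{sequence} with $a_n = \|u_r^n\|^2$, $r = 1 + \lambda\dt > 1$, and $B = \dt\bigl(\nu^{-1}F^2 + \mu C_I^2 U^2\bigr)$. Invoking Lemma~\ref{geoseries} then gives $\|u_r^M\|^2 \le \|u_r^0\|^2\,(1+\lambda\dt)^{-M} + B/(\lambda\dt)$, and since $B/(\lambda\dt) = \lambda^{-1}\bigl(\nu^{-1}F^2 + \mu C_I^2 U^2\bigr)$, absorbing $C_I^2$ into the generic constant $C$ produces exactly the claimed bound for $C_{data}$.
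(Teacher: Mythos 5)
Your proposal is correct and follows essentially the same route as the paper's proof: testing with $v_r = u_r^{n+1}$, using skew-symmetry, Cauchy--Schwarz and Young's inequalities, retaining the nonnegative term $\mu\|\Plh u_r^{n+1}\|^2$ to absorb the nudging cross term (and then dropping its residual), and closing via the Poincar\'e inequality and Lemma~\ref{geoseries}. The paper states this only as a brief sketch citing \cite{LRZ18}, so your write-up is simply the fully detailed version of the same argument, including the correct observation that interpolating the test function is what makes the bound unconditional in $\dt$.
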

	\begin{proof}
		This result follows as in \cite{LRZ18} by letting $v_r = u_r^{n+1}$ in \eqref{DAROMalg} and using Cauchy-Schwarz and Young's inequalities. Additionally, the non-negative DA term $\|\Plh u_r^{n+1}\|^2$ can be dropped from the left after bounding the right hand side.  
	\end{proof}

	To analyze rates of convergence of the approximation we make the following regularity assumptions on the NSE \cite{layton2008introduction}:
	\begin{assumption}\label{assumption:reg}
		We assume that the solution of the NSE satisfies 
		\begin{equation*}
		\begin{aligned}
		&u \in L^{\infty}(0,T;H^{1}(\Omega))\cap H^{1}(0,T;H^{k+1}(\Omega))\cap
		H^{2}(0,T;H^{1}(\Omega)), \\
		&p \in L^{2}(0,T; H^{k+1}(\Omega)), \\
		&f \in L^{2}(0,T;L^{2}(\Omega)).
		\end{aligned}
		\end{equation*}
	\end{assumption}
	
	We next prove that solutions to \eqref{DAROMalg} converge to the true solution exponentially fast, up to discretization and ROM projection error. 
	\begin{theorem} \label{convtheorem}
		Define 
		\begin{align*}
			\alpha_1 &:= {\nu} - 2\mu (\beta_{2}-1)C^{2}_{I}H^{2}, 
			\\ \alpha_2 &:= 2\mu - \frac{\mu C^{2}_{I}}{2\beta_{1}} - \frac{\mu}{2\beta_{2}} - 6\nu^{-1}M^{2}||| \mathbb{S}_{R} |||_{2} \|\nabla u^{n+1}\|^{2},
			\end{align*}
			which have parameters $\mu$, $H$, $\beta_{i}>0$, $i=1,2$ that are chosen so that $\alpha_i >0$, $i = 1,2$. Then under the  regularity assumptions of Assumption \ref{assumption:reg}, we have that
		\begin{equation}
		\begin{aligned}
		\|u^{n+1}& - u_r^{n+1} \|^2  \leq \|u^{0} - u_r^{0} \|^2 \left( \frac{1}{1 + 2\lambda \dt}  \right)^{n+1} 
		\\
		&+C\lambda^{-1}\bigg\{ \dt^2 +  \nu^{-1} h^{2k} +  \beta_{1}C^{2}_{I}  \mu \bigg(h^{2k+2} + \dt^4 + \sum_{j = r+1}^{d} \lambda_j \bigg)
		\\ &+ ( \nu^{-1}M^2 +  \nu^{-1}M^2|||\mathbb{S}_{R}|||_{2}) \bigg(h^{2k} + \dt^4 + \sum_{j = r+1}^{d} \|\nabla \varphi_j\|^2\lambda_j \bigg)\bigg\},
		\end{aligned}
		\label{convresult}
		\end{equation}
		where $\lambda = \min\{ \alpha_1 C_P^{-2}, \alpha_2 \}$.
	\end{theorem}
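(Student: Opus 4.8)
The plan is to follow the standard POD error-analysis paradigm, but to exploit the data-assimilation term to manufacture exponential-in-time contractivity. Write $e^{n+1} := u^{n+1} - u_r^{n+1}$ and split it through the $L^2$ ROM projection as $e^{n+1} = \eta^{n+1} + \phi^{n+1}$, where $\eta^{n+1} := u^{n+1} - P_r(u^{n+1})$ and $\phi^{n+1} := P_r(u^{n+1}) - u_r^{n+1} \in X_r$. The projection errors $\eta^{n+1}$ are controlled entirely by Assumption~\ref{assumption:rom-projection}, so the real work is to close a recursive estimate for $\phi^{n+1}$. To obtain an error equation I would test the weak form of \eqref{eqn:nse-1} at $t^{n+1}$ against $v_r \in X_r \subset V_h$, replacing $u_t(t^{n+1})$ by $\dt^{-1}(u^{n+1}-u^n)$ plus a backward-Euler truncation remainder $\tau^{n+1}$ (bounded by $O(\dt)$ in $L^2$ using the $H^2(0,T;\cdot)$ regularity of Assumption~\ref{assumption:reg}, giving the $\dt^2$ term), and accounting for the pressure through a consistency term $(p^{n+1}-q_h,\nabla\cdot v_r)$ that is small because $v_r$ is only discretely divergence free (this yields the $\nu^{-1}h^{2k}$ term). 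The decisive feature of DA is that the nudging term evaluated at the true solution, $\mu(I_H(u^{n+1}-u(t^{n+1})),I_H v_r)$, is identically zero, so subtracting \eqref{DAROMalg} introduces $\mu(I_H e^{n+1},I_H v_r)$ into the error equation at no cost on the true-solution side.

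I would then set $v_r = \phi^{n+1}$. Since $P_r$ is the $L^2$ projection, $(\eta^{n+1},\phi^{n+1})=0$, so the discrete time-derivative collapses to $\dt^{-1}(\phi^{n+1}-\phi^n,\phi^{n+1})$, handled by the polarization identity to give $\tfrac{1}{2\dt}(\|\phi^{n+1}\|^2-\|\phi^n\|^2)$. The diffusion term contributes the coercive $\nu\|\nabla\phi^{n+1}\|^2$ (plus a cross term in $\nabla\eta^{n+1}$), and the nudging term contributes the coercive $\mu\|I_H\phi^{n+1}\|^2$ (plus a cross term in $I_H\eta^{n+1}$). The crux of the data-assimilation mechanism is to convert $\mu\|I_H\phi^{n+1}\|^2$ into genuine $L^2$ control of $\phi^{n+1}$: using a lower bound of the form $\|I_H w\|^2 \ge c\|w\|^2 - c'\|I_H w - w\|^2$ together with \eqref{interp1} trades a multiple of $\mu\|\phi^{n+1}\|^2$ against a multiple of $\mu H^2\|\nabla\phi^{n+1}\|^2$. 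With the Young parameters $\beta_1,\beta_2$ regulating this exchange and the $I_H\eta^{n+1}$ cross term bounded via \eqref{interp2}, this bookkeeping is exactly what produces the net coefficients $\alpha_1$ of $\|\nabla\phi^{n+1}\|^2$ (positive once $H$ is small) and $\alpha_2$ of $\|\phi^{n+1}\|^2$ (positive once $\mu$ is large), and leaves behind the data term $\beta_1 C_I^2\mu(h^{2k+2}+\dt^4+\sum_{j}\lambda_j)$.

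The main obstacle is the nonlinear term. I would write $b^*(u^{n+1},u^{n+1},\phi^{n+1}) - b^*(u_r^{n+1},u_r^{n+1},\phi^{n+1}) = b^*(u^{n+1},e^{n+1},\phi^{n+1}) + b^*(e^{n+1},u^{n+1},\phi^{n+1}) - b^*(e^{n+1},e^{n+1},\phi^{n+1})$ and substitute $e^{n+1}=\eta^{n+1}+\phi^{n+1}$. Skew-symmetry annihilates every term of the form $b^*(\cdot,\phi^{n+1},\phi^{n+1})$; the factors $\|\nabla u^{n+1}\|$ are controlled by Assumption~\ref{assumption:reg} and the $\eta^{n+1}$-terms are higher-order small via Assumption~\ref{assumption:rom-projection} and Lemma~\ref{bbounds}, feeding the $\nu^{-1}M^2(1+|||\mathbb{S}_R|||_2)$ data terms. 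The genuinely dangerous term is $b^*(\phi^{n+1},u^{n+1},\phi^{n+1})$: bounding it by $M\|\nabla\phi^{n+1}\|\,\|\nabla u^{n+1}\|\,\|\nabla\phi^{n+1}\|$ (Lemma~\ref{bbounds}), applying Young to peel off a $\tfrac{\nu}{2}\|\nabla\phi^{n+1}\|^2$ into the diffusion coercivity, and then using the POD inverse estimate \eqref{POD:inveq} on the remaining $\|\nabla\phi^{n+1}\|^2$ produces the destabilizing $6\nu^{-1}M^2|||\mathbb{S}_R|||_2\|\nabla u^{n+1}\|^2\|\phi^{n+1}\|^2$ contribution, i.e.\ precisely the quantity subtracted in $\alpha_2$. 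Positivity of $\alpha_2$ is exactly the statement that the nudging-generated coercivity dominates this term, so the whole argument hinges on first choosing $\mu$ large enough for $\alpha_2>0$ and then $H$ small enough for $\alpha_1>0$; checking that these two conditions are simultaneously attainable is the key structural point.

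Finally, I would collect everything, bounding the diffusion, pressure, truncation, and interpolation cross terms by Young's inequality and Poincar\'e and absorbing their $\|\nabla\phi^{n+1}\|^2$/$\|\phi^{n+1}\|^2$ parts into the coercive quantities, so that the surviving left-hand side is $\tfrac{1}{2\dt}(\|\phi^{n+1}\|^2-\|\phi^n\|^2) + \alpha_1\|\nabla\phi^{n+1}\|^2 + \alpha_2\|\phi^{n+1}\|^2$. Using $\alpha_1\|\nabla\phi^{n+1}\|^2 \ge \alpha_1 C_P^{-2}\|\phi^{n+1}\|^2$ combines the coercive contributions into a lower bound $\lambda\|\phi^{n+1}\|^2$ with $\lambda=\min\{\alpha_1 C_P^{-2},\alpha_2\}$, and multiplying through by $2\dt$ yields the recursion $(1+2\lambda\dt)\|\phi^{n+1}\|^2 \le \|\phi^n\|^2 + B\dt$, to which Lemma~\ref{geoseries} applies with $r=1+2\lambda\dt$; since $r-1=2\lambda\dt$, this produces both the geometric factor $(1+2\lambda\dt)^{-(n+1)}$ and the $\lambda^{-1}$ prefactor on the data terms. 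Because $u_r^0\in X_r$ we have $\phi^0 = P_r(u^0-u_r^0)$ and hence $\|\phi^0\|\le\|u^0-u_r^0\|$ by contractivity of $P_r$, and the triangle inequality $\|u^{n+1}-u_r^{n+1}\|\le\|\eta^{n+1}\|+\|\phi^{n+1}\|$ together with Assumption~\ref{assumption:rom-projection} for $\eta^{n+1}$ then delivers \eqref{convresult}.
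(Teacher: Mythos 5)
Your overall architecture matches the paper's proof almost step for step: the same $L^2$-projection splitting $e^{n+1}=\eta^{n+1}+\phi_r^{n+1}$ with $(\eta^{n+1}-\eta^n,\phi_r^{n+1})=0$, the same extraction of nudging coercivity (trading $\mu\|\Plh\phi_r^{n+1}\|^2$ for $\mu\|\phi_r^{n+1}\|^2$ minus a $\mu H^2\|\nabla\phi_r^{n+1}\|^2$ correction via \eqref{interp1}, with $\beta_1,\beta_2$ regulating the exchange and the $\Plh\eta^{n+1}$ cross term handled through \eqref{interp2}), the same treatment of the dangerous term $b^*(\phi_r^{n+1},u^{n+1},\phi_r^{n+1})$ via the second bound of Lemma~\ref{bbounds} plus the POD inverse estimate \eqref{POD:inveq}, absorbed by the $\alpha_2$-coercivity rather than by a discrete Gronwall inequality, and the same conclusion via Lemma~\ref{geoseries} with $r=1+2\lambda\dt$ followed by the triangle inequality. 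The pressure, truncation, and viscous cross terms are also handled exactly as in the paper.

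The one genuine gap is in your splitting of the nonlinearity. Writing $b^*(u,u,\phi)-b^*(u_r,u_r,\phi)=b^*(u,e,\phi)+b^*(e,u,\phi)-b^*(e,e,\phi)$ and substituting $e=\eta+\phi$ leaves, after skew-symmetry, not only the terms you account for but also $b^*(\phi_r^{n+1},\eta^{n+1},\phi_r^{n+1})$, which is \emph{quadratic in the unknown error} and therefore not a ``higher-order small $\eta$-term feeding the data terms'' as you assert. It must be absorbed, e.g.\ by the same inverse-estimate trick you use on the dangerous term, which would insert an additional deficit of the form $\nu^{-1}M^2|||\mathbb{S}_R|||_2\|\nabla\eta^{n+1}\|^2$ into $\alpha_2$ --- bounded under Assumption~\ref{assumption:rom-projection}, but absent from the theorem's stated $\alpha_2$, so your argument as written does not close to exactly \eqref{convresult}. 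The paper sidesteps this by using the two-term splitting $b^*(u_r^{n+1},e^{n+1},v_r)+b^*(e^{n+1},u^{n+1},v_r)$, in which the only term quadratic in $\phi_r$ is the dangerous one; the price is the term $b^*(u_r^{n+1},\eta^{n+1},\phi_r^{n+1})$, whose coefficient $\|\nabla u_r^{n+1}\|^2$ is controlled by \eqref{POD:inveq} together with the a priori stability bound of Lemma~\ref{lemma:stability} (the constant $C_{data}$) --- an ingredient your proposal never invokes and which your route would otherwise avoid needing. Either adopt the paper's splitting and add the stability lemma, or keep your splitting and enlarge $\alpha_2$ to absorb the extra term; each repair closes the proof within your framework.
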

\begin{remark}
	The $\dt^2$ term that shows up on the right hand side of \eqref{convresult} is a result of the first order time stepping in Algorithm \ref{DAROMalg}. If we instead used a second order approximation, like BDF2, then this term would be replaced by $\dt^4$. 
\end{remark}
\begin{remark}
	If $\beta_1, \beta_2$ are chosen to be $1/2$, the condition $\alpha_1 >0$ reduces to $\nu - C\mu H^2 >0$, which is the same condition found in \cite{LRZ18} and references therein, for a relationship between the nudging parameter, viscosity, and coarse mesh width. Choosing $\beta_1 , \beta_2$ larger can  allow one to choose the coarse mesh width $H$ larger (and thus require less observational data) while still satisfying $\alpha_i >0$, $i=1,2$. However, there is a trade-off because $\beta_1$ appears on the right hand side of equation \eqref{convresult}: As $\beta_1$ increases, so does the bound on the DA-ROM error. 
\end{remark}
	\begin{proof}
		The NSE (true) solution satisfies
		\begin{align}
		\frac{1}{\dt}(u^{n+1}- u^n, v_r) + b^*(u^{n+1}, & u^{n+1}, v_r) + \nu(\nabla u^{n+1}, \nabla v_r) + (p^{n+1}, \nabla \cdot v_r) \nonumber
		\\ & = (f^{n+1}, v_r) + \left( \frac{1}{\dt}(u^{n+1} - u^n) - u_t^{n+1}, v_r\right).
		\label{truesoln}
		\end{align}
		Note that we can write the time derivative term above as $C \dt u_{tt}(t^*)$ for some $t^* \in (t^n, t^{n+1})$ \cite{LRZ18}. Subtracting \eqref{DAROMalg} from \eqref{truesoln} and letting $e^n := u_r^{n} - u^n$, we obtain 
		\begin{align}
		\frac{1}{\dt}(e^{n+1} & - e^n, v_r)  + \nu(\nabla e^{n+1}, \nabla v_r) + \mu (\Plh e^{n+1}, \Plh v_r) \nonumber 
		\\ & \leq C \dt \left( u_{tt}(t^*), v_r\right) + b^*(u_r^{n+1}, e^{n+1}, v_r) + b^*(e^{n+1}, u^{n+1}, v_r)  + (p^{n+1}, \nabla \cdot v_r).
		\label{diffeqn}
		\end{align}
		Decompose the error as a part inside the ROM space and one outside by adding and subtracting the $L^2$ projection of $u^n$ into the ROM space, $P_r(u^n)$ (see \eqref{ROM-projection}):
		\[  e^{n} = (u_r^n - P_r(u^n)) + (P_r(u^n) - u^n) = : \phi_r^{n} + \eta^n.  \]
		 Letting $v_r = \phi_r^{n+1}$ in \eqref{diffeqn}, we note that since $\phi_r^{n+1} \in X_{r} \subset V_{h}$, for any $q_h\in Q_{h}$,
		\begin{align}
		(p^{n+1}, \nabla \cdot \phi_r^{n+1}) = (p^{n+1} - q_{h}, \nabla \cdot \phi_r^{n+1}).
		\end{align}
		Adding and subtracting $\phi_r^{n+1}$ to both components of the nudging term we have
		\begin{equation}
		\begin{aligned}
		&(\Plh \phi_r^{n+1} + \Plh \eta^{n+1} + \phi_r^{n+1} - \phi_r^{n+1}, \Plh \phi_r^{n+1} + \phi_r^{n+1} - \phi_r^{n+1})
		\\
		&=\| \phi_r^{n+1}\|^{2} + (\phi_r^{n+1}, \Plh \phi_r^{n+1}  - \phi_r^{n+1}) + (\Plh \phi_r^{n+1} + \Plh \eta^{n+1}  - \phi_r^{n+1}, \Plh \phi_r^{n+1} + \phi_r^{n+1} - \phi_r^{n+1})
		\\
		& = \| \phi_r^{n+1}\|^{2} + (\phi_r^{n+1}, \Plh \phi_r^{n+1}  - \phi_r^{n+1}) +(\Plh \eta^{n+1}  , \Plh \phi_r^{n+1} + \phi_r^{n+1} - \phi_r^{n+1})  
		\\
		&+ (\Plh \phi_r^{n+1} - \phi_r^{n+1}, \Plh \phi_r^{n+1} - \phi_r^{n+1} + \phi_r^{n+1})
		\\
		&= \| \phi_r^{n+1}\|^{2} + 2(\phi_r^{n+1}, \Plh \phi_r^{n+1}  - \phi_r^{n+1}) + (\Plh \eta^{n+1}  , \Plh \phi_r^{n+1}) +  \|\Plh \phi_r^{n+1} - \phi_r^{n+1}\|^{2}.  
		\end{aligned}
		\end{equation}
		Using the polarization identity, the fact that $(\eta^{n+1}-\eta^{n},\phi^{n+1}_{r}) = 0$ (by the definition of the $L^{2}$ projection), and dropping the nonnegative term $\frac{1}{2 \dt}\|\phi_r^{n+1} - \phi_r^{n}\|^{2}$ on the left hand side, we have
		\begin{equation}\label{diffeqn2}
		\begin{aligned}
		\frac{1}{2\dt}& [\|\phi_r^{n+1}\|^2 - \|\phi_r^n\|^2 ]  + \nu\|\nabla \phi_r^{n+1}\|^2  + \mu \|\phi_r^{n+1}\|^2  + \mu \|\Plh\phi_r^{n+1} - \phi_r^{n+1}\|^2
		\\ & \leq \nu | (\nabla \eta^{n+1} , \nabla \phi_r^{n+1})  + \left|C \dt \left( u_{tt}(t^*), \phi^{n+1}_r\right)\right| 
		+ | b^*(u_r^{n+1}, \eta^{n+1}, \phi^{n+1}_r)|  
		\\ & \,\,\,\, +  |b^*(\eta^{n+1}, u^{n+1}, \phi^{n+1}_r) | 	+ |b^*(\phi_r^{n+1}, u^{n+1}, \phi^{n+1}_r) |  
		+ |(p^{n+1} - q_{h}, \nabla \cdot \phi_r^{n+1}) |   
		\\ & \,\,\,\, + \mu | (\Plh \eta^{n+1}, \Plh \phi_r^{n+1}) | 
		+ 2\mu |(\phi_r^{n+1}, \Plh\phi_r^{n+1} - \phi_r^{n+1})|.
		\end{aligned}
		\end{equation}
		By Poincar\'{e}, Cauchy Schwarz, and Young's inequalities, we bound the first two terms on the right hand side and the pressure term,
		\begin{equation}
		\begin{aligned}
		\nu (\nabla \eta^{n+1},\nabla \phi^{n+1}_{r}) &\leq \frac{\nu}{4c_{1}} \|\nabla \eta^{n+1}\|^{2} + c_{1}\nu \|\nabla \phi_{r}^{n+1}\|^{2} ,
		\\
		C \dt \left( u_{tt}(t^*), \phi^{n+1}_r\right) &\leq \frac{C \dt^{2} \nu^{-1}}{4c_{2}}\|u_{tt}(t^*)\|^{2} + c_{2}\nu\| \nabla \phi_{r}^{n+1}\|^{2},
		\\
		(p^{n+1} - q_{h},\nabla \cdot \phi_{r}^{n+1}) &\leq \frac{\nu^{-1}}{4c_{3}}\|p^{n+1} - q_{h}^{n+1}\|^{2} + c_{3}\nu\| \nabla \phi_{r}^{n+1}\|^{2}.
		\end{aligned}
		\end{equation}
		The first two nonlinear terms are now bounded similarly to those in \cite{mohebujjaman2017energy} using Cauchy-Schwarz and Young's inequalities, and the first inequality from Lemma \ref{bbounds}:
		\begin{align}
		&b^*(\eta^{n+1}, u^{n+1}, \phi^{n+1}_r) \leq \frac{\nu^{-1} M^2}{4c_{4}} \|\nabla u^{n+1}\|^2\|\nabla \eta^{n+1}\|^2 +  c_{4}\nu\|\nabla \phi^{n+1}_r\|^{2},
		\\
		& b^*(u_r^{n+1}, \eta^{n+1}, \phi^{n+1}_r) \leq \frac{\nu^{-1} M^2}{4 c_{5}} \|\nabla u_{r}^{n+1}\|^2\|\nabla \eta^{n+1}\|^2 +  c_{5}\nu\|\nabla \phi^{n+1}_r\|^{2}.
		\end{align}
		How we treat the third nonlinear term  is the key difference in the proof from standard schemes (see chapter 9 of \cite{layton2008introduction}). Due to the added dissipation from the DA term on the left-hand side of \eqref{diffeqn2}, we will be able to hide the term containing $\phi^{n+1}_r$, rather than invoking a discrete Gronwall's inequality. Thus, for this term we use the second inequality from Lemma \ref{bbounds} and the ROM inverse inequality \eqref{POD:inveq} to obtain
		\begin{equation}
		\begin{aligned}
		b^*(\phi^{n+1}_r, u^{n+1}, \phi^{n+1}_r) &\leq M \|\phi^{n+1}_{r}\|^{1/2}\|\nabla\phi^{n+1}_{r}\|^{1/2} \|\nabla u^{n+1} \|\|\nabla\phi^{n+1}_{r}\|
		\\
		&\leq \frac{\nu^{-1}M^{2}||| \mathbb{S}_{R} |||_{2}}{4 c_{6}} \|\nabla u^{n+1}\|^{2}\|\phi^{n+1}_{r}\|^{2} + c_{6}\nu\|\nabla \phi_{r}^{n+1}\|^{2}.
		\end{aligned}
		\end{equation} 
		The first nudging terms on the right hand side of \eqref{diffeqn2} are bounded using \eqref{interp2}, Cauchy Schwarz, and Young's inequality
		\begin{equation}
		\begin{aligned}
		\mu (\Plh \eta^{n+1}, \Plh \phi_r^{n+1}) &\leq \frac{\mu}{4\beta_{1}}\|\Plh \phi_r^{n+1}\|^{2} + \mu\beta_{1} \|\Plh \eta^{n+1}\|^{2} 
		\\
		&\leq \frac{\mu C^{2}_{I}}{2\beta_{1}}\|\phi_r^{n+1}\|^{2} + 2\mu \beta_{1} C^{2}_{I}\|\eta^{n+1}\|^{2}.
		\end{aligned}
		\end{equation}
		The second nudging term is bounded using Cauchy Schwarz and Young's inequality, and \eqref{interp1}, yielding 
		\begin{equation}
		\begin{aligned}
		2\mu (\phi_r^{n+1}, \Plh\phi_r^{n+1} - \phi_r^{n+1}) &\leq \frac{\mu}{4\beta_{2}}\|\phi_r^{n+1}\|^{2} + \mu\beta_{2}\|\Plh\phi_r^{n+1} - \phi_r^{n+1}\|^{2}
		\\
		&= \frac{\mu}{4\beta_{2}}\|\phi_r^{n+1}\|^{2} + \mu(\beta_{2}-1)\|\Plh\phi_r^{n+1} - \phi_r^{n+1}\|^{2} + \mu\|\Plh\phi_r^{n+1} - \phi_r^{n+1}\|^{2}
		\\
		&\leq  \frac{\mu}{4\beta_{2}}\|\phi_r^{n+1}\|^{2} + C^{2}_{I}H^{2}\mu(\beta_{2}-1)\|\nabla \phi_r^{n+1}\|^{2} + \mu\|\Plh\phi_r^{n+1} - \phi_r^{n+1}\|^{2} .
		\end{aligned}
		\end{equation}
		Now letting $c_i = \frac{1}{12}$, $i=1,2,..,6$, combining terms, and recalling our definition of $\alpha_1$ and $\alpha_2$ given in the statement of the theorem, \eqref{diffeqn2} becomes
		\begin{equation}\label{diffeqn3}
		\begin{aligned}
		\|&\phi_r^{n+1}\|^2 + \alpha_1 \dt \|\nabla \phi_r^{n+1}\|^2 + \alpha_2 \dt \|\phi_r^{n+1}\|^2 
		\\ & \leq \|\phi_r^{n}\|^2
		+ C\dt^3\nu^{-1} \|u_{tt}(t^*)\|^2 
		+ C\dt \nu^{-1} M^2 \|\nabla \eta^{n+1}\|^2 \|\nabla u^{n+1}\|^2
		\\ &+ C\dt \nu^{-1} M^2 \|\nabla \eta^{n+1}\|^2 \|\nabla u_{r}^{n+1}\|^2 +C \nu \dt \|\nabla \eta^{n+1}\|^2 + C \nu^{-1} \dt \|p^{n+1} - q_h\|^2
		\\
		&+ 2C^{2}_{I}\beta_{1} \dt \mu   \|\eta^{n+1}\|^2 ,
		\end{aligned}
		\end{equation}
		where $C$ is a generic constant which is independent of $\nu,p,u,T,H,C_{I}$. Next, we bound the fourth term on the right hand side further using the ROM inverse inequality \eqref{POD:inveq}, and the stability result from Lemma \ref{lemma:stability}
		\begin{align}
		C\dt \nu^{-1} M^2 \|\nabla \eta^{n+1}\|^2 \|\nabla u_{r}^{n+1}\|^2 \leq C C_{data}\dt \nu^{-1} M^2 |||\mathbb{S}_{R}|||_{2}\|\nabla \eta^{n+1}\|^2 .
		\end{align}
		Now applying Lemma \ref{ROMlemma}, using our regularity assumptions, and taking $\lambda := \min\{ C_P^{-2} \alpha_1, \alpha_2 \}$ in \eqref{diffeqn3}, it then follows that
		\begin{equation}
		\begin{aligned}
		(1 + &2\lambda\dt )\|\phi_r^{n+1} \|^2 
		\\ & \leq \|\phi_r^{n} \|^2 + C \dt^3 + C\nu^{-1}\dt h^{2k} +  2C^{2}_{I}\beta_{1} \dt \mu \bigg(h^{2k + 2} + \dt^4 + \sum_{j = r+1}^{d} \lambda_j \bigg)
		\\ & \,\,\,\,+ \dt(C \nu^{-1}M^2 + C C_{data} \nu^{-1}M^2|||\mathbb{S}_{R}|||_{2} + C\nu) \bigg(h^{2k} + \dt^4 + \sum_{j = r+1}^{d} \|\nabla \varphi_j\|^2\lambda_j \bigg).
		\end{aligned}
		\end{equation}
		Finally, by Lemma \ref{geoseries}, we obtain
		\begin{equation}
		\begin{aligned}
		\|&\phi_r^{n+1} \|^2  \leq \|\phi_r^{0} \|^2 \left( \frac{1}{1 + 2\lambda \dt}  \right)^{n+1} \\
		&+2\lambda^{-1}\dt^{-1}\bigg\{C \dt^3 + C \nu^{-1}\dt h^{2k} +  2\beta_{1}C^{2}_{I} \dt \mu \bigg(h^{2k+2} + \dt^4 + \sum_{j = r+1}^{d} \lambda_j \bigg)
		\\ &+ \dt(C \nu^{-1}M^2 + C C_{data} \nu^{-1}M^2|||\mathbb{S}_{R}|||_{2} + C\nu) \bigg(h^{2k} + \dt^4 + \sum_{j = r+1}^{d} \|\nabla \varphi_j\|^2\lambda_j \bigg)\bigg\}.
		\end{aligned}
		\end{equation}
		The triangle inequality completes the proof.

	\end{proof}

	\section{Numerical Experiments}
	\label{sec:numerical-results}
	
	\begin{figure}[!ht]
		\centering
		\includegraphics[scale = .8]{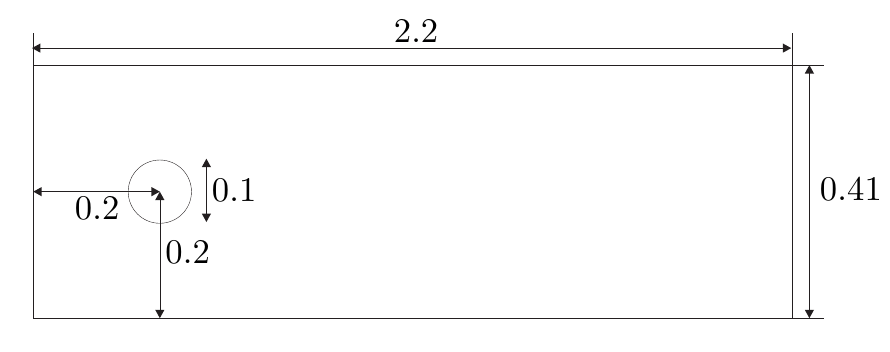}
		\caption{\label{cylinderdomain} Shown above is the domain for the flow past a cylinder test problem.}
	\end{figure}
	
In this section, we perform a numerical investigation of the new DA-ROM.
In Section~\ref{sec:rates-of-convergence}, we illustrate the theoretical scalings proved in Section~\ref{sec:da-rom-analysis}.
In Section~\ref{sec:numerical-accuracy}, we investigate the numerical accuracy of the new DA-ROM.
In Section~\ref{sec:inaccurate-snapshots}, we investigate the new DA-ROM when inaccurate snapshots are used in its construction.
Finally, in  Section~\ref{sec:variable-param}, we propose and investigate an adaptive nudging procedure.

We consider Algorithm \ref{DAROMalg} (except here with BDF2) applied to 2D channel flow past a cylinder \cite{ST96}, with Reynolds number $Re$=500.  The domain is the rectangular channel  {\color{black}[0, 2.2]}$\times$[0, 0.41], with a cylinder centered at $(0.2,0.2)$ and radius $0.05,$ see Figure \ref{cylinderdomain}. There is no external forcing ($f=0$), no-slip boundary conditions are prescribed for the walls and the cylinder, and an inflow profile is given by 
	\begin{align*}
	u_1(0,y,t) & = u_1(2.2,y,t) = \frac{6}{0.41^2}y(0.41-y),
	\\u_2(0,y,t) & = u_2(2.2,y,t) = 0.
	\end{align*}
	We take $\nu=0.0002$, and enforce the zero-traction boundary condition with the usual `do-nothing' condition at the outflow.  
	
	The DNS is run to t=15 with the usual BDF2-FEM discretization \cite{layton2008introduction} using $(P_2,P_1^{disc})$ Scott-Vogelius elements on a barycenter refined Delaunay mesh that provided 103K velocity dof, a time step of $\Delta t=0.002$, and with the simulations starting from rest ($v_h^0=v_h^{1}=0$).  Lift and drag calculations were performed for the computed solution and compared to the literature \cite{ST96,XMRT18}, which verified the accuracy of the {\color{black}DNS}.  We used the snapshots from t=5 to t=6 to generate the ROM modes.
	
	For the lift and drag calculations, we used the definitions
	\begin{align*}
	c_d(t) &= 20\int_S \left( \nu \frac{\partial u_{t_S}(t)}{\partial n}n_y - p(t)n_x \right)dS,
	\\c_l(t) &= 20 \int_S \left( \nu \frac{\partial u_{t_S}(t)}{\partial n}n_x - p(t)n_y \right)dS,
	\end{align*}
	where $u_{t_S}$ is tangential velocity, $S$ is the cylinder, and $n = \langle n_x, n_y\rangle$ the outward unit normal to the domain. For the calculations, we used the global integral formulation from \cite{J04}.
	
	The coarse mesh for DA is constructed using the intersection of a uniform rectangular mesh with the domain.  We take $H$ to be the width of each rectangle, and use $H=\frac{2.2}{20}$ (400 measurement locations) in our tests.  Figure \ref{cylmesh} shows in red a 35K dof mesh and associated $H=\frac{2.2}{8}$ coarse mesh in black.
	
	\begin{figure}[!ht]
		\centering
		\includegraphics[width = .50\textwidth,height=.12\textwidth]{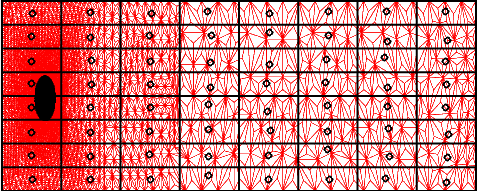}	
		\caption{\label{cylmesh} Shown above is a FE mesh (in red) and the $H=\frac{2.2}{8}$ coarse mesh and nodes (in black).}
	\end{figure}
	
	For the DA-ROM computations, we start from zero initial conditions $v_h^1 = v_h^0=0$, use the same spatial and temporal discretization parameters as the DNS, and start assimilation with the $t=5$ DNS solution (i.e., time 0 for DA-ROM corresponds to $t=5$ for the DNS).   

\subsection{Rates of Convergence}
	\label{sec:rates-of-convergence}

In this section, we illustrate numerically the rates of convergence in Section~\ref{sec:da-rom-analysis}.
Theorem \ref{convtheorem} gave a DA-ROM error estimate that depends on the ROM eigenvalues and eigenvectors, for sufficiently large $n$ and assumptions on $\mu$ and $H$:
\[  
\|u^{n+1} - u_r^{n+1}\| \leq C(\nu)\left(  \dt^2 + h^{k+1} + \left( \sum_{j=r+1}^d \lambda_j(1+\|\nabla \varphi_j\|^2)\right)^{1/2}  \right), 
\]
where $(\lambda_j, \varphi_j)$ are the eigenpairs of the ROM eigenvalue problem described in Section 2.1. Table \ref{error_table} illustrates the dependence of the error bound on the dimension of the DA-ROM space, $r$.  Taking $T$=1, $\mu=100$, $H = \frac{2.2}{20}$, $Re$=500, we run the ROM with varying $r$ and calculate the $L^2$ spatial error at the last time step.  We also calculate the quantity in the error estimate corresponding to the eigenvalues and eigenvectors (i.e., $\sum_{j=r+1}^d \lambda_j(1+\|\nabla \varphi_j\|^2))^{1/2}$), and use this and the error to calculate the corresponding convergence rate with respect to increasing $r$.  From the theorem, we expect a rate of 1, and our results are consistent with this rate.

\begin{table}[h!]
	\centering
	\begin{tabular}{|c|c|c|c|}
		\hline
		No. modes	&	$ ( \sum_{j=r+1}^d \lambda_j(1+\|\nabla \varphi_j\|^2))^{1/2} $	&	Error	&	Rate	\\ \hline 
		8	&	2.218e+2	&	4.980e-2	& --	\\
		10	&	1.077e+2	&	4.850e-2	&	1.74	\\
		12	&	9.246e+1	&	3.046e-2  &	2.51	\\
		14	&	7.680e+1	&	1.793e-2	&	1.70	\\
		16	&	4.590e+1	&	1.360e-2	   &	1.36	\\
		18	&	3.334e+1	&	9.498e-3 	&	1.12	\\
		20	&	2.601e+1	&	6.974e-3	&	1.24	\\ \hline
	\end{tabular}
	\caption{
		DA-ROM rates of convergence with respect to the ROM truncation.  
		\label{error_table}
	}
\end{table}

\subsection{Numerical Accuracy}
	\label{sec:numerical-accuracy}

In this section, we investigate the numerical accuracy of the new DA-ROM.  
Specifically, we compare the performance of the DA-ROM to that of the standard ROM ($\mu=0$) and the DNS solution in predicting energy and drag (lift is accurate in all of our tests, so we omit it here).  We run to t=10, and run tests with both $N=8$ and $N=16$ modes, and with varying $\mu = 0, 10, 100$ (we also ran $\mu=1$, and results are very close to those for $\mu=0$).  Results are shown in figure \ref{N8_re500_plots} for energy and drag prediction, and we observe a big improvement from using DA.  For $N=16$ and $\mu=100$, very good accuracy is achieved from the DA-ROM.  For $N=8$, $\mu=10$ is somewhat more accurate than for $\mu=100$, but both are better than no DA.

\begin{figure}[!ht]
	\centering
	$N=8$ \\
	\includegraphics[width = .69\textwidth,height=.27\textwidth]{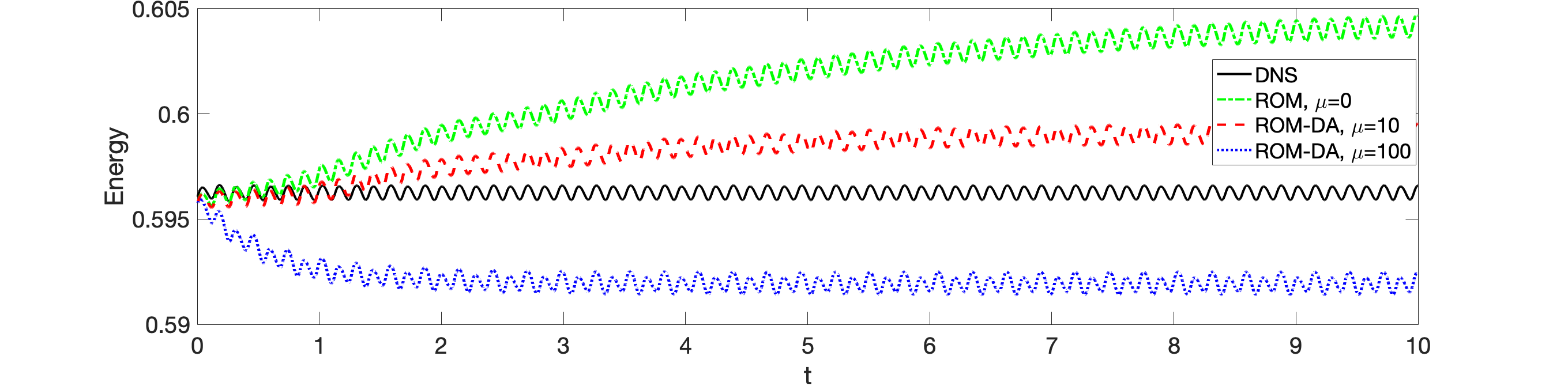}	
	\includegraphics[width = .3\textwidth,height=.27\textwidth]{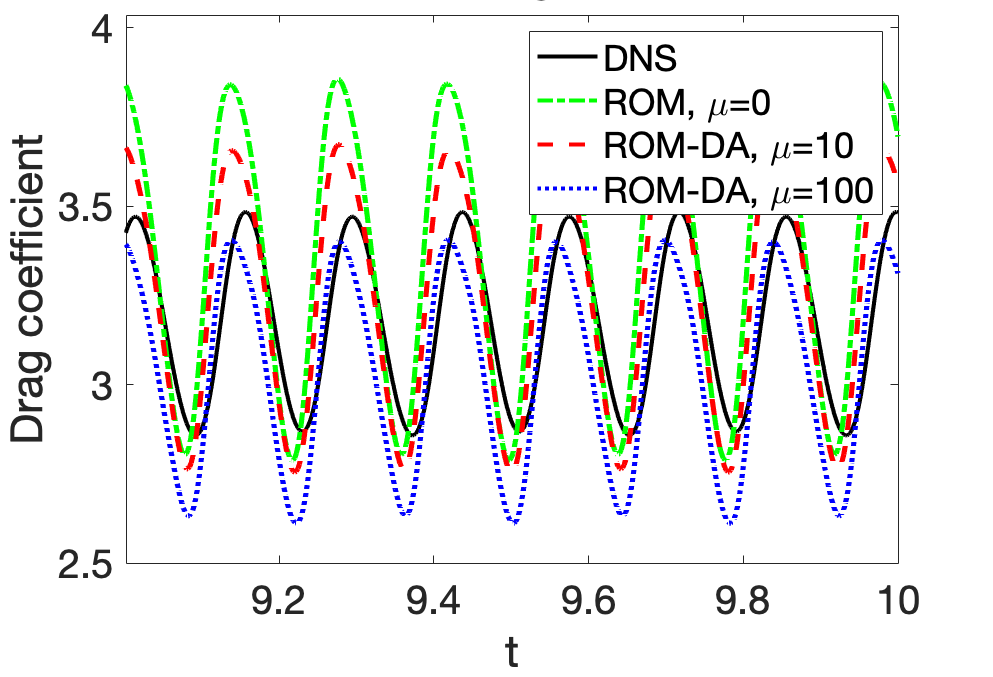}	 \\
	$N=16$ \\
	\includegraphics[width = .68\textwidth, height=.27\textwidth]{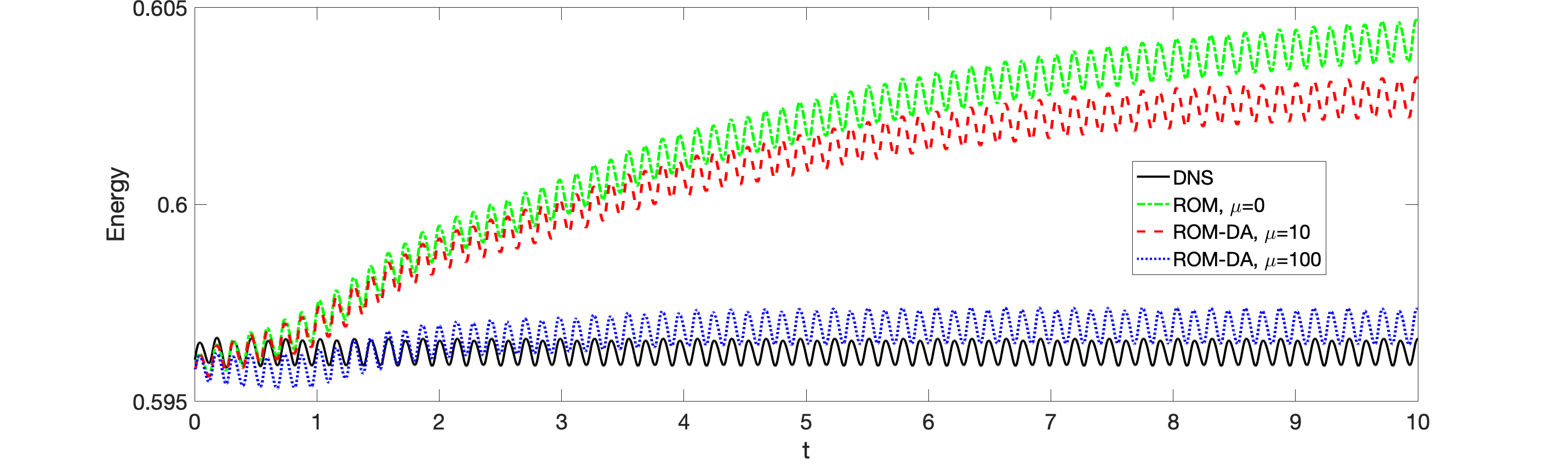}	 
	\includegraphics[width = .3\textwidth,height=.27\textwidth]{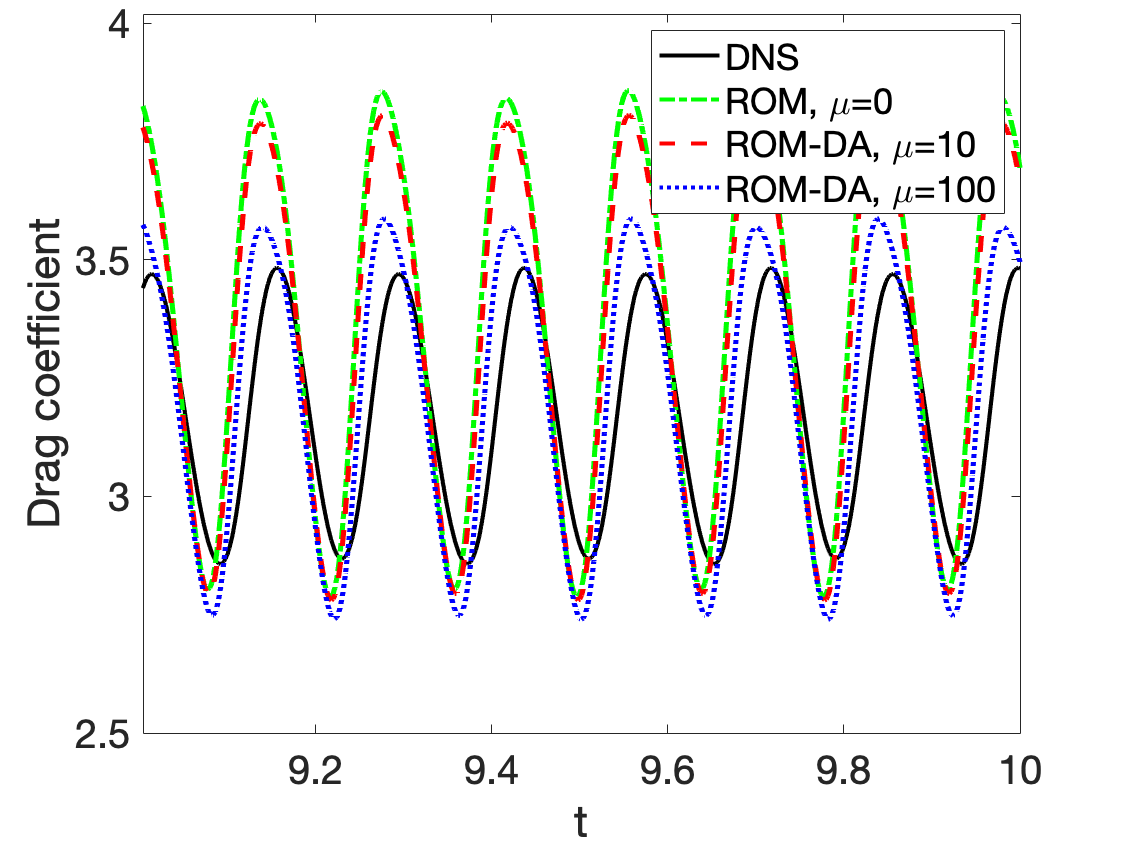}
	\caption{\label{N8_re500_plots} Shown above are the energy and drag coefficient versus time for  $Re=500$ DA-ROM with different choices of $\mu$,  $H=\frac{2.2}{20}$, and with 8 modes (top) and 16 modes (bottom).}
\end{figure}


\subsection{Inaccurate Snapshots}
	\label{sec:inaccurate-snapshots}

\begin{figure}[!ht]
	\centering
	Full basis: \hspace{1.4in} Basis 1: \hspace{1.4in} Basis 2: \\
	\includegraphics[width = 0.32\textwidth]{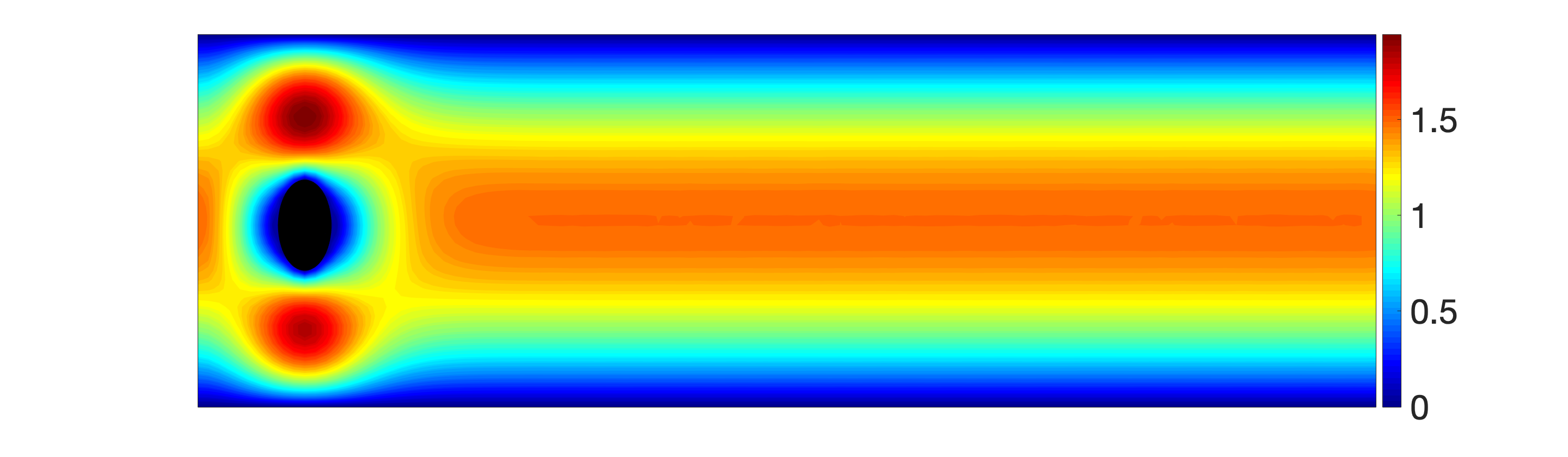}
	\includegraphics[width = 0.32\textwidth]{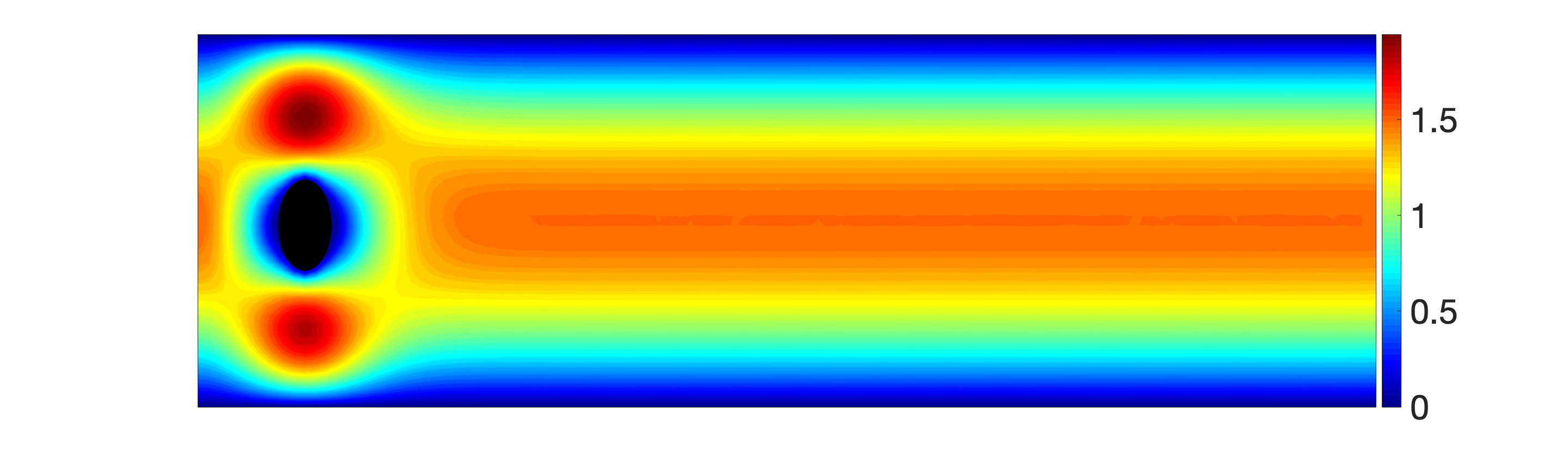}	 
	\includegraphics[width = 0.32\textwidth]{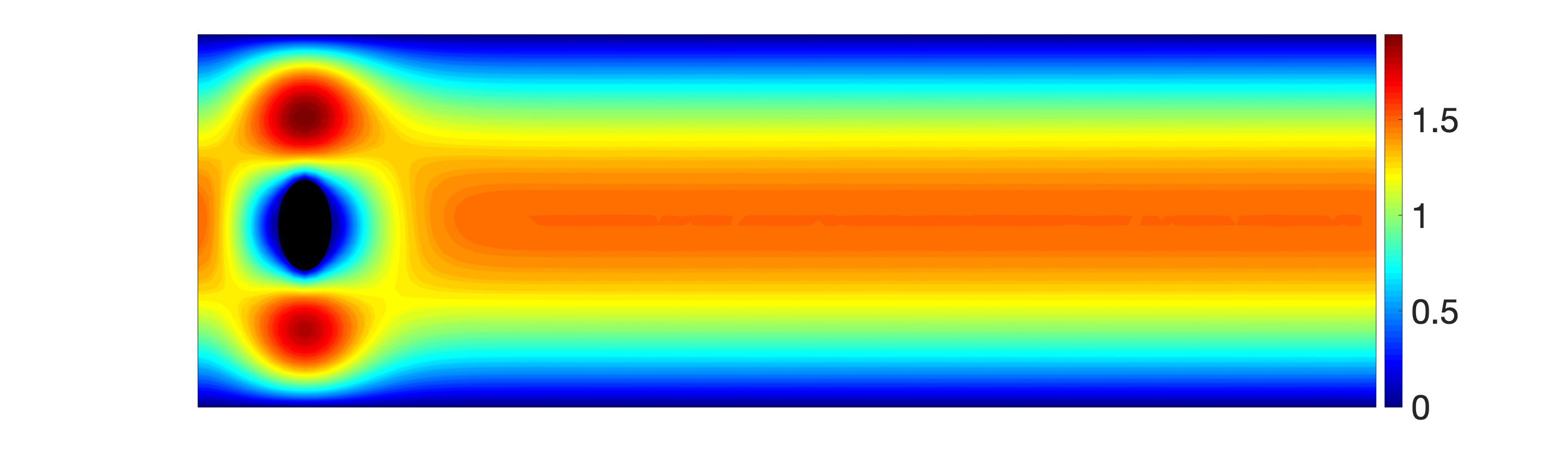}	 \\
	\includegraphics[width = 0.32\textwidth]{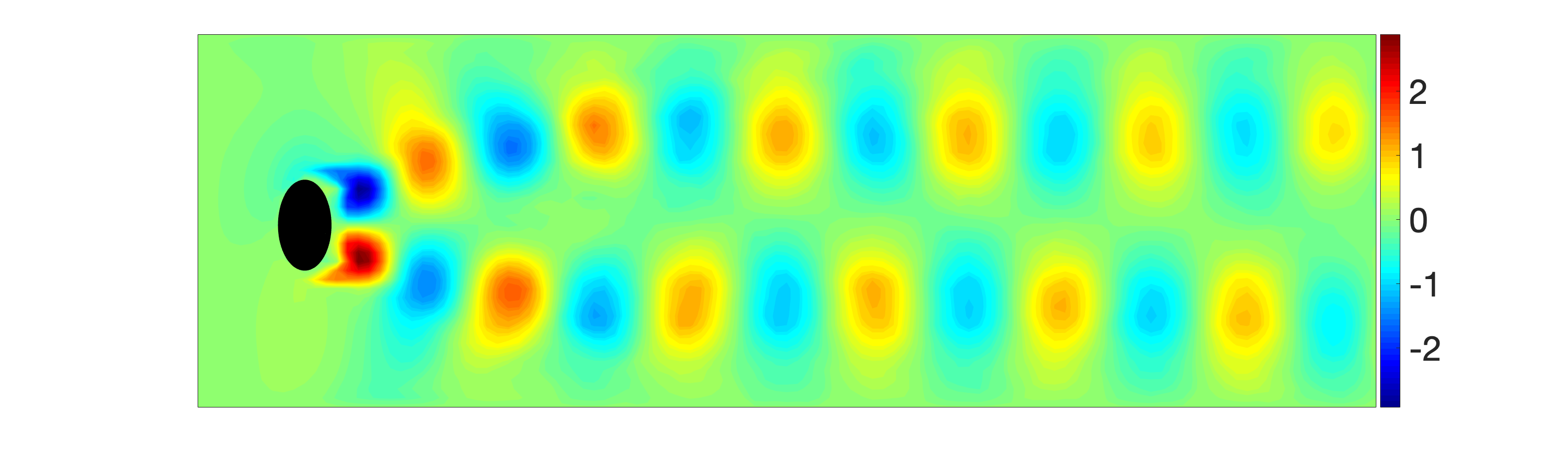}
	\includegraphics[width = 0.32\textwidth]{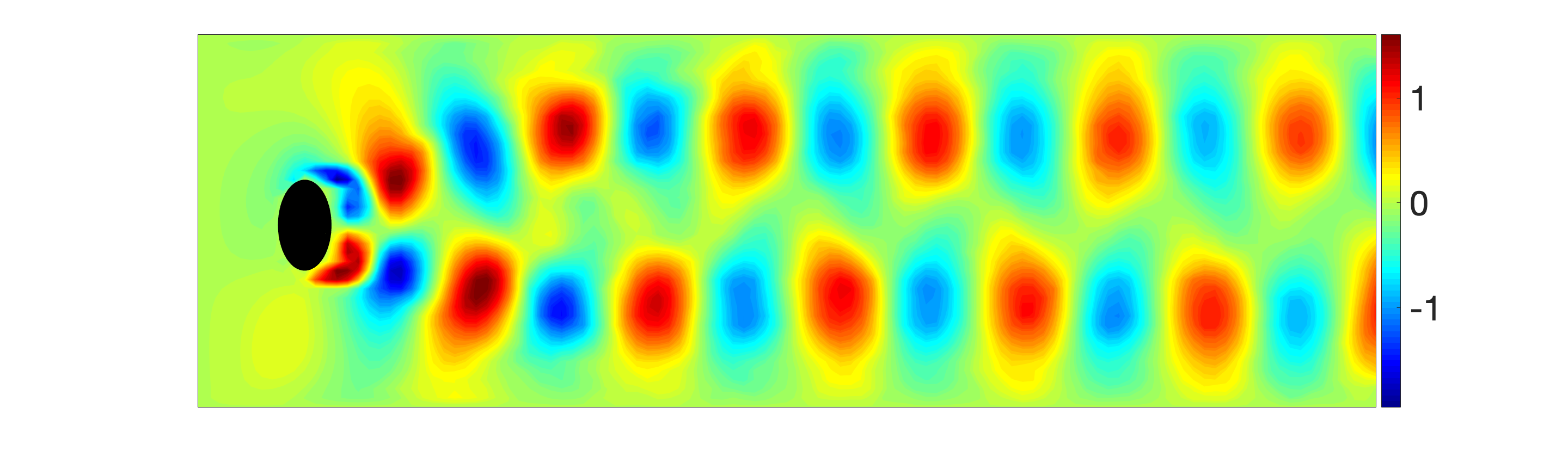}  
	\includegraphics[width = 0.32\textwidth]{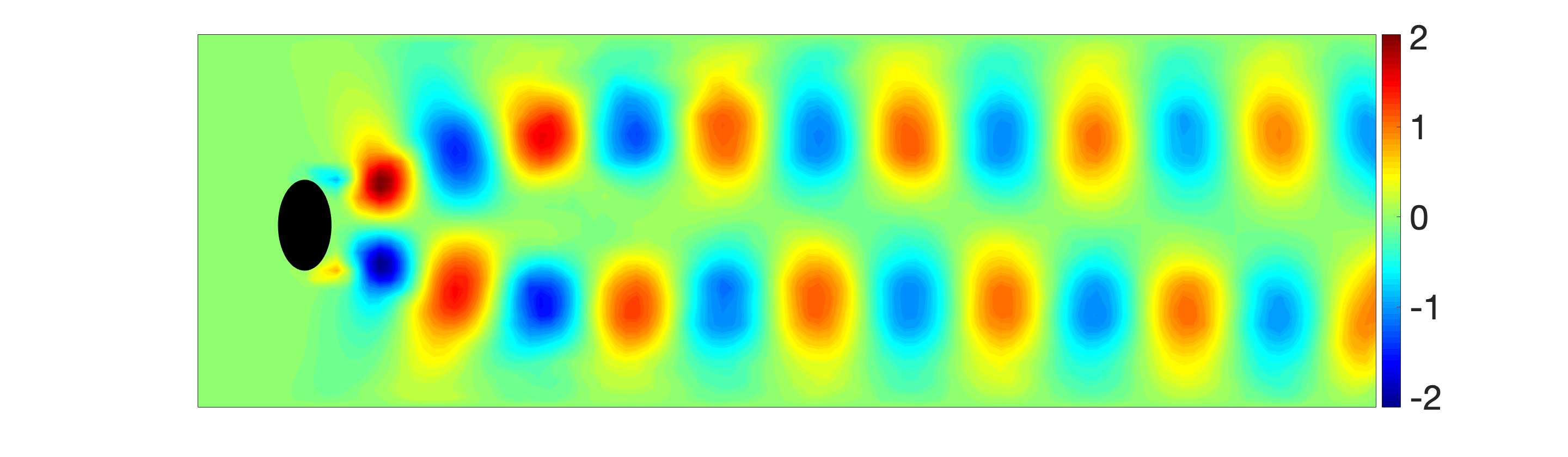}	 \\
	\includegraphics[width = 0.32\textwidth]{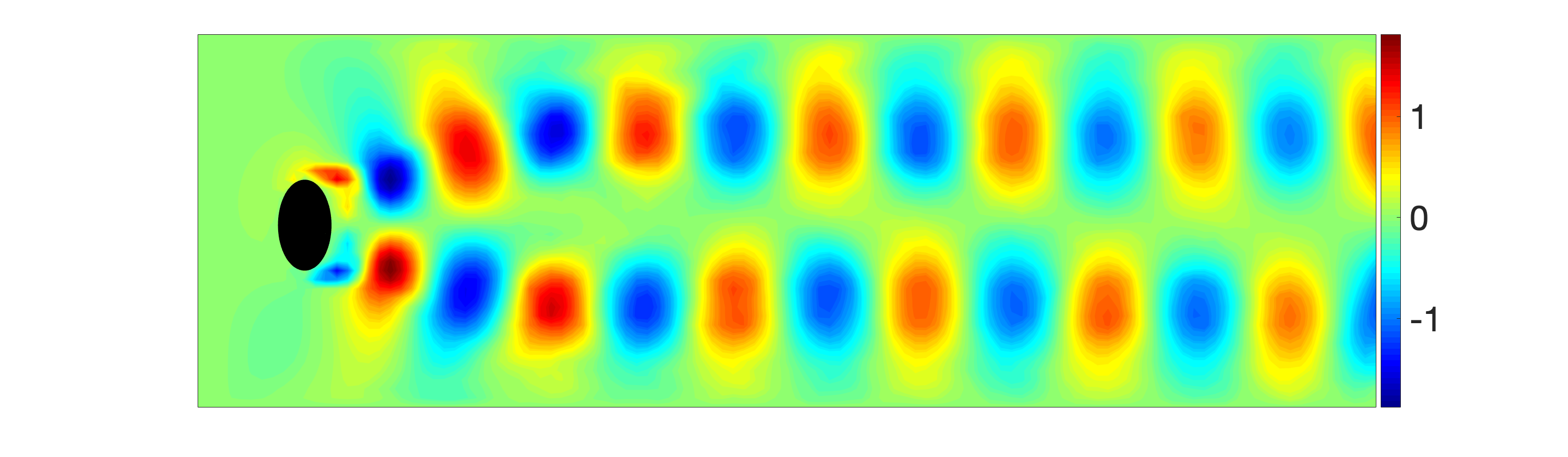}
	\includegraphics[width = 0.32\textwidth]{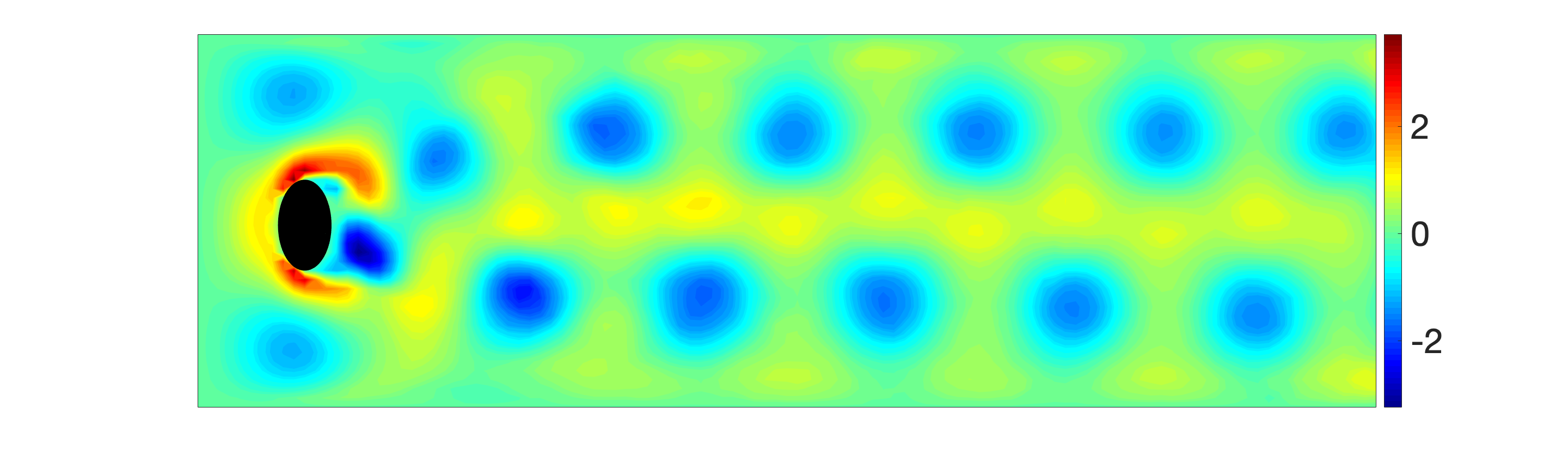}
	\includegraphics[width = 0.32\textwidth]{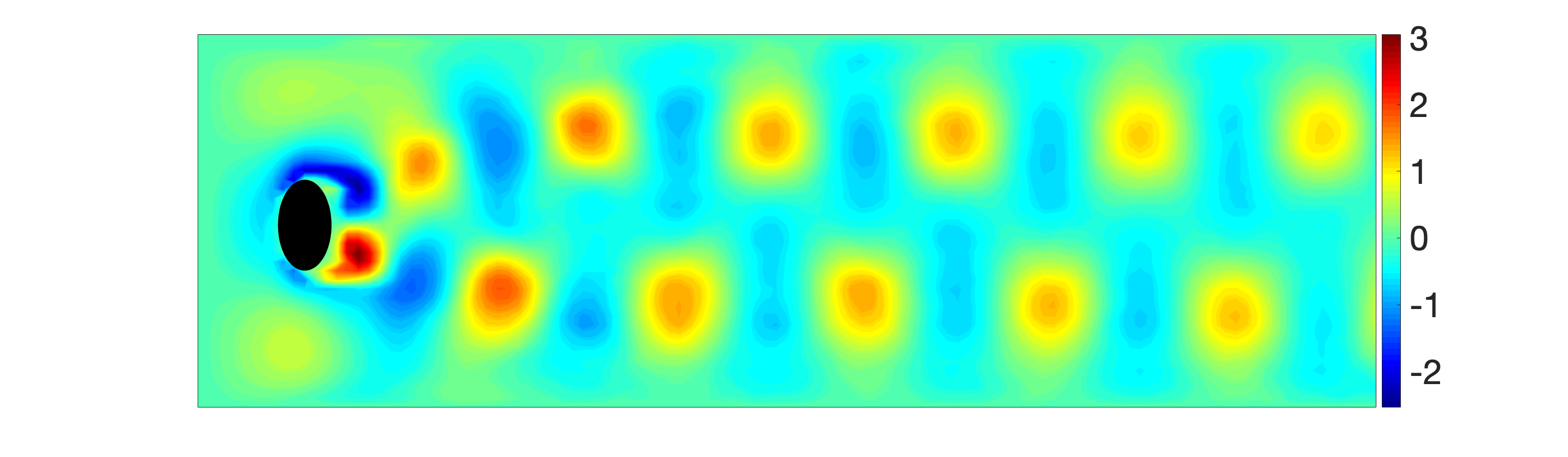}	  \\
	\includegraphics[width = 0.32\textwidth]{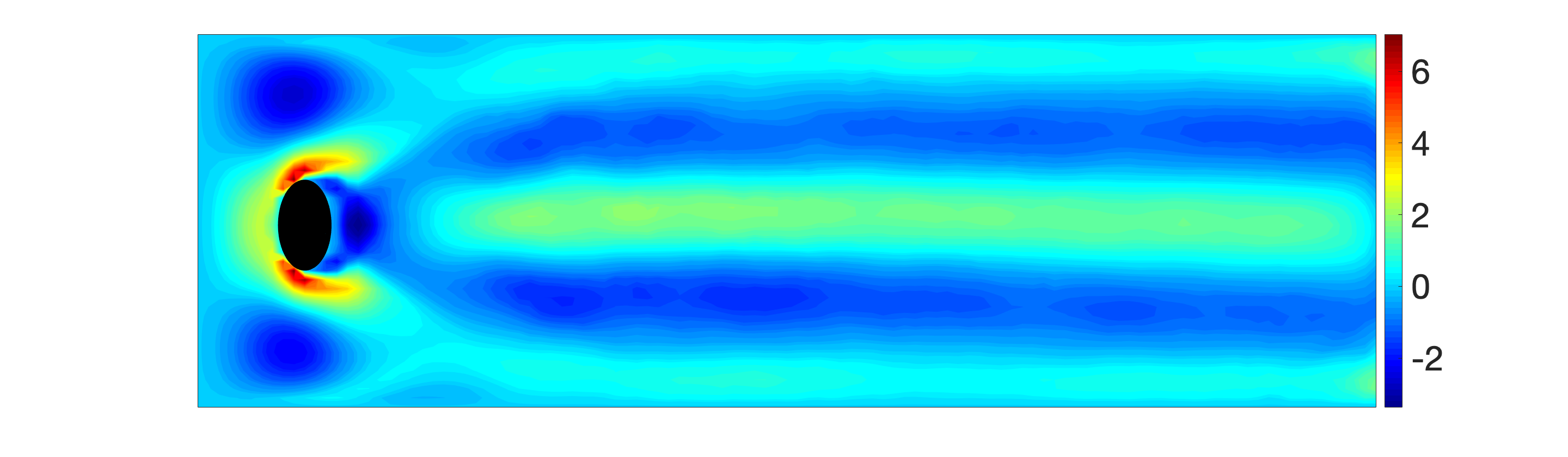}
	\includegraphics[width = .32\textwidth]{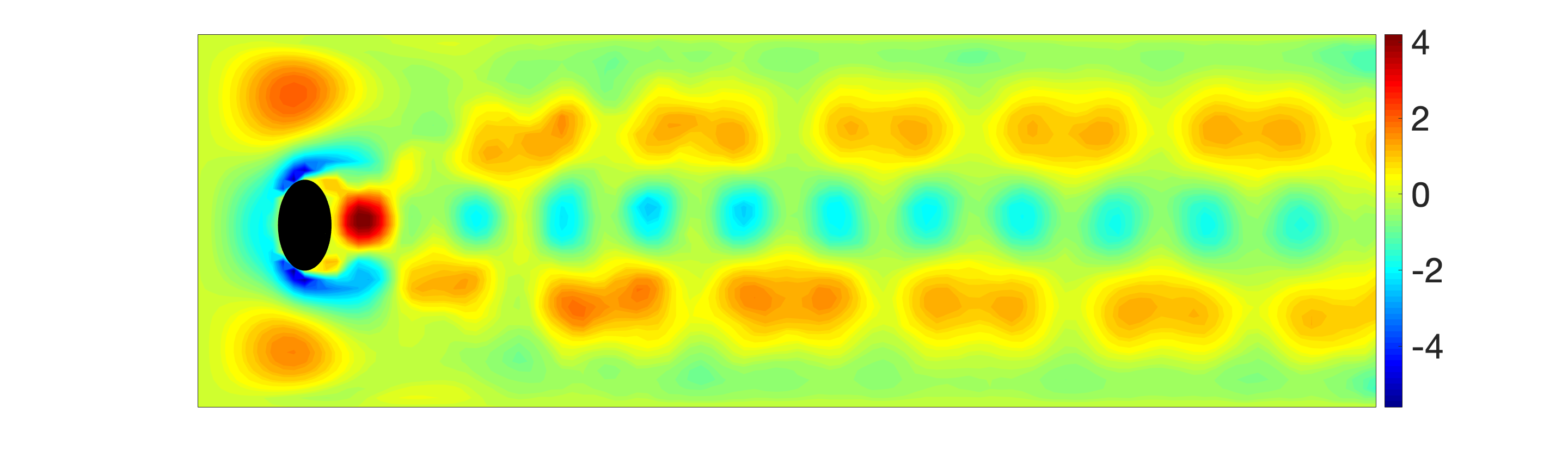}	
	\includegraphics[width = 0.32\textwidth]{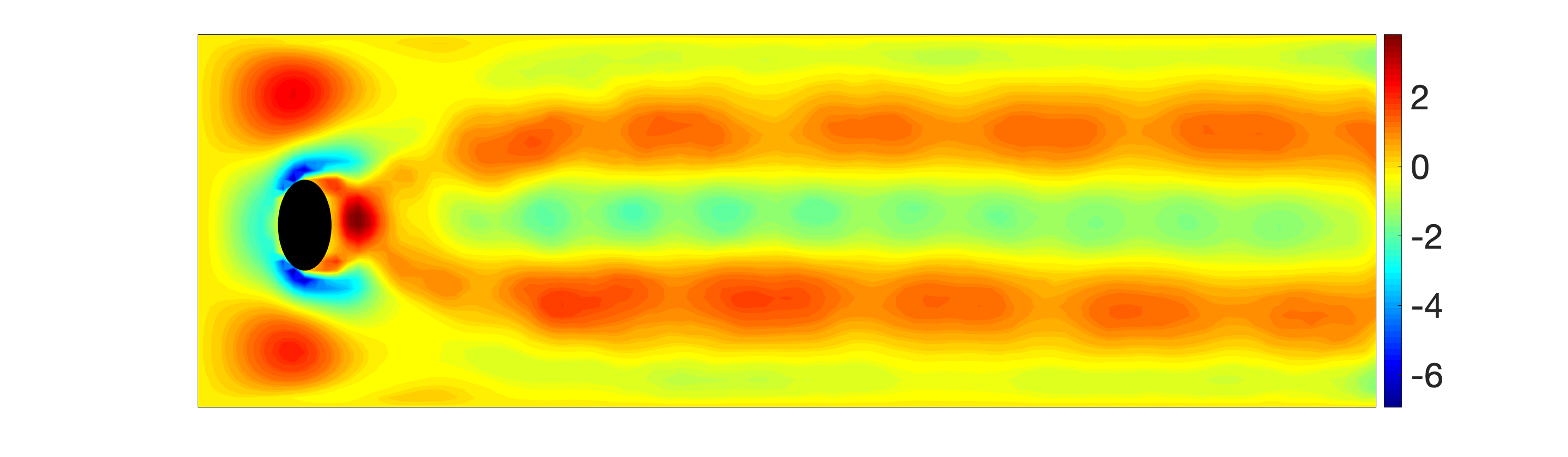}	 \\
	\includegraphics[width = 0.32\textwidth]{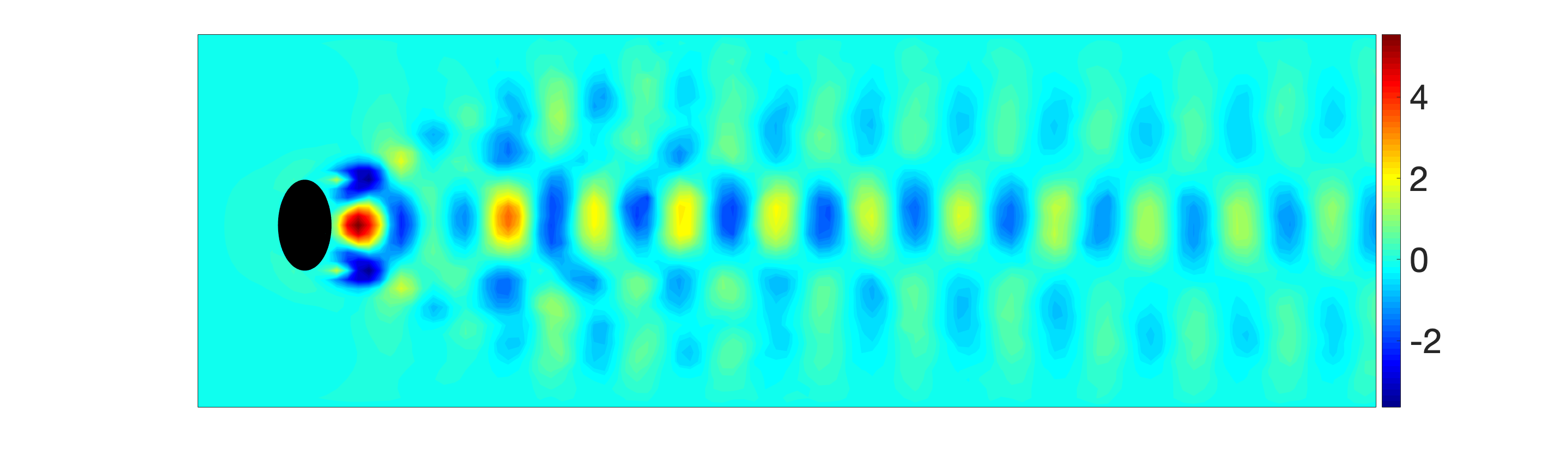}
	\includegraphics[width = .32\textwidth]{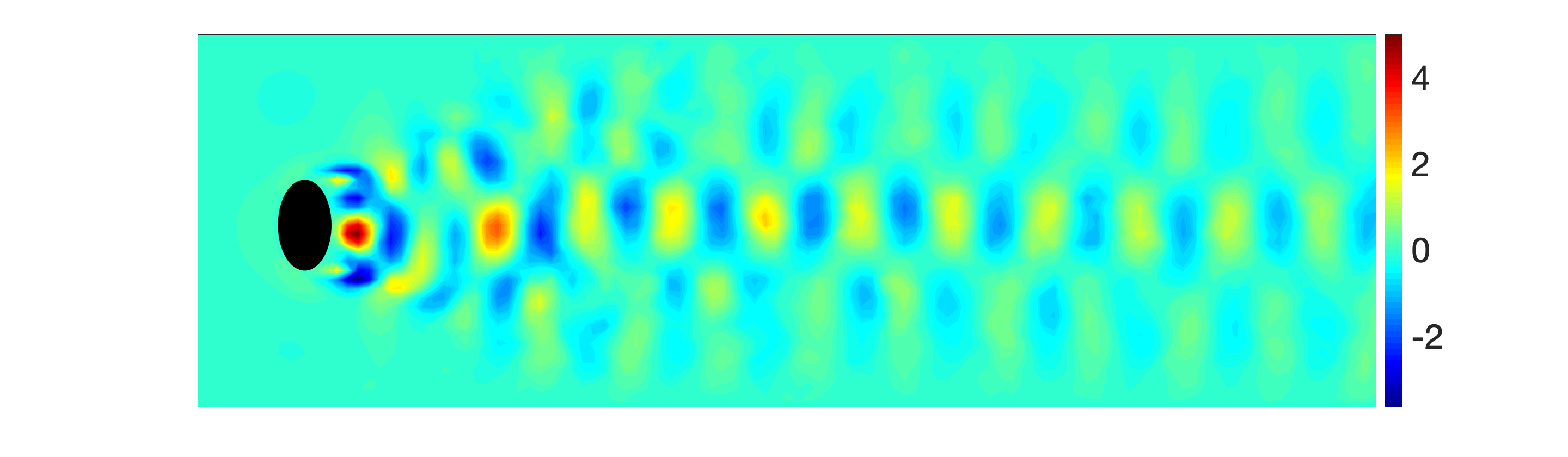}	
	\includegraphics[width = 0.32\textwidth]{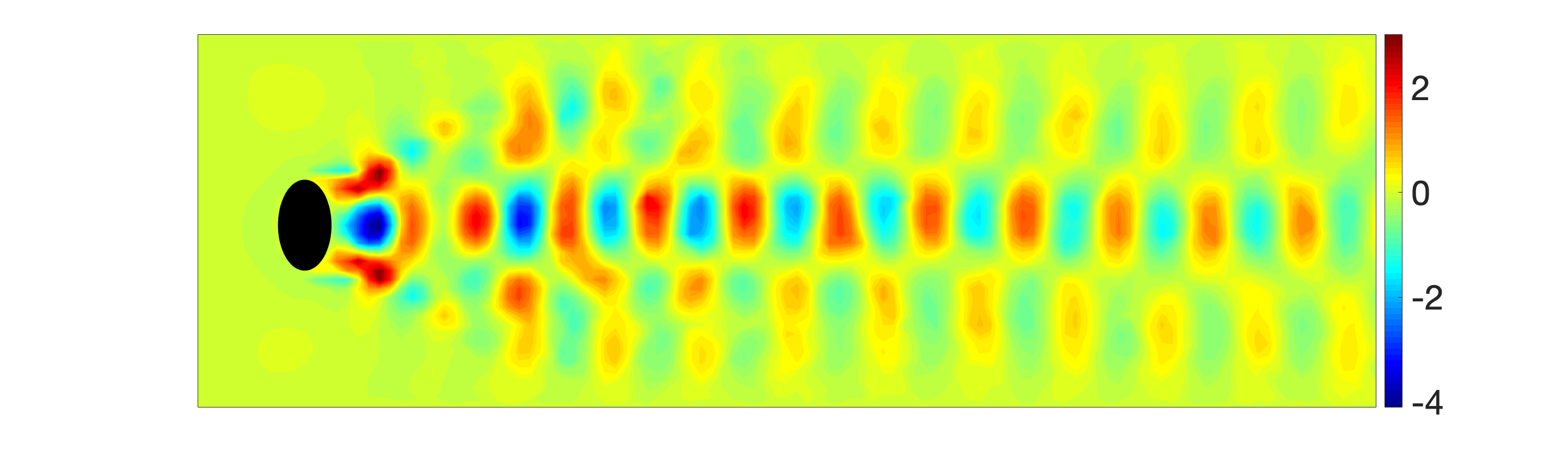}	
	\caption{\label{basisfncs} Pictured above are the first 5 basis functions generated by the ROM for first the full basis, then inaccurate bases 1 and 2, which both use less than one period of data to generate the basis. }
\end{figure}
In this section, we investigate the DA-ROM performance when the snapshots are inaccurate.
Specifically, we consider the same test as in Section 4.2, but now with only a small amount of data being used to build the ROM basis.  This is an important aspect of the ROM to investigate, because in practical applications complete data is generally not available, or the amount of data needed to sufficiently capture the behavior of the true solution is unknown. 

\begin{figure}[!ht]
	\centering
	Basis 1: \\
	\includegraphics[width = .68\textwidth, height = 1.39in]{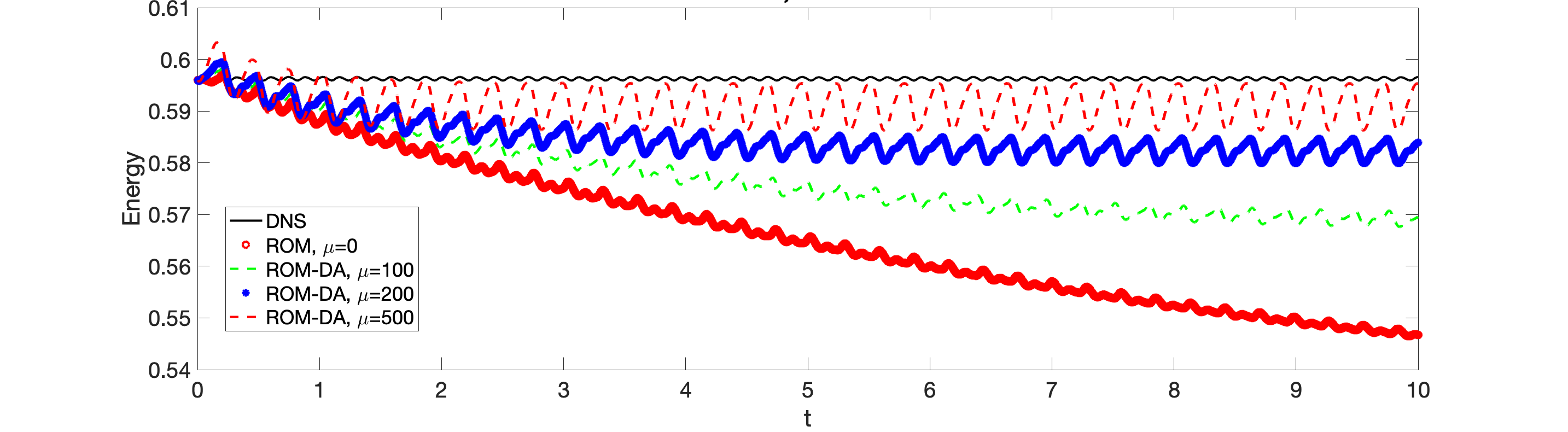}	 
	\includegraphics[width = .3\textwidth]{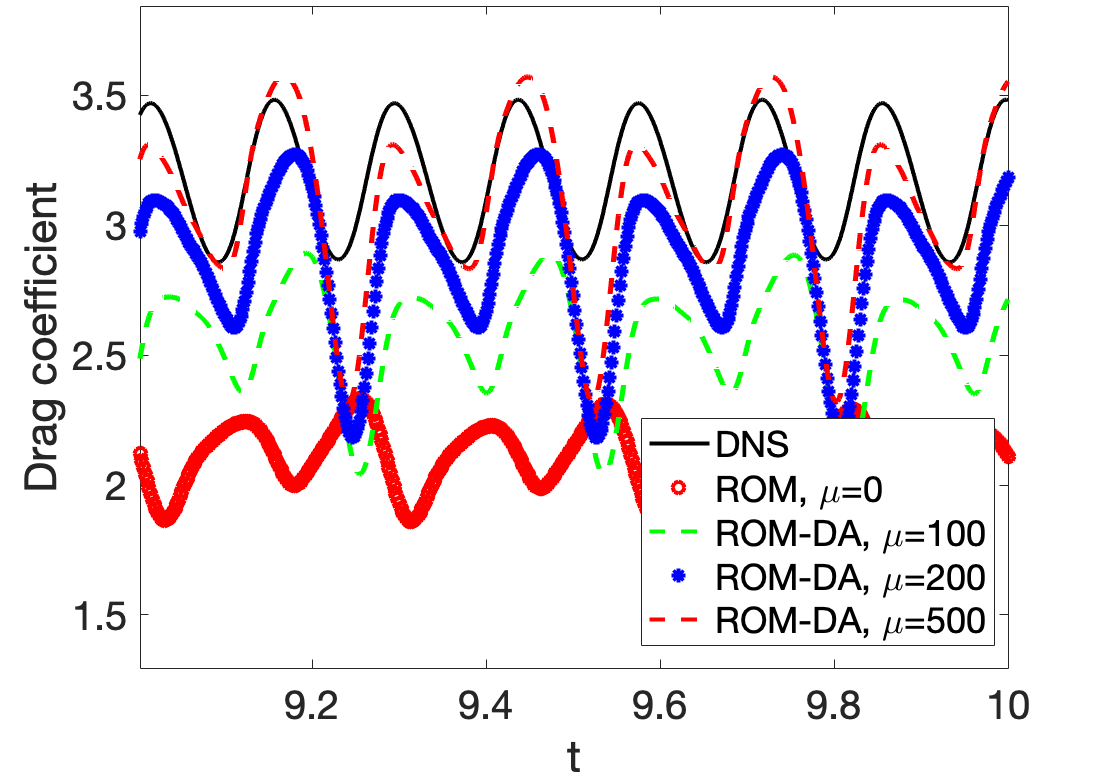} \\
	Basis 2: \\
	\includegraphics[width = .68\textwidth, height  = 1.39in]{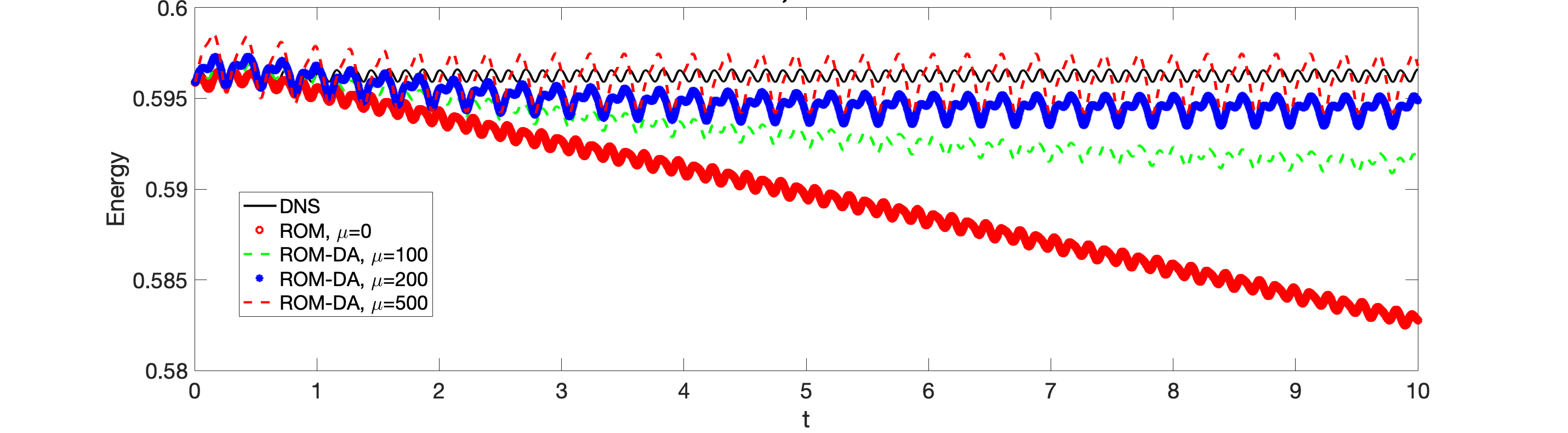}  
	\includegraphics[width = .3\textwidth]{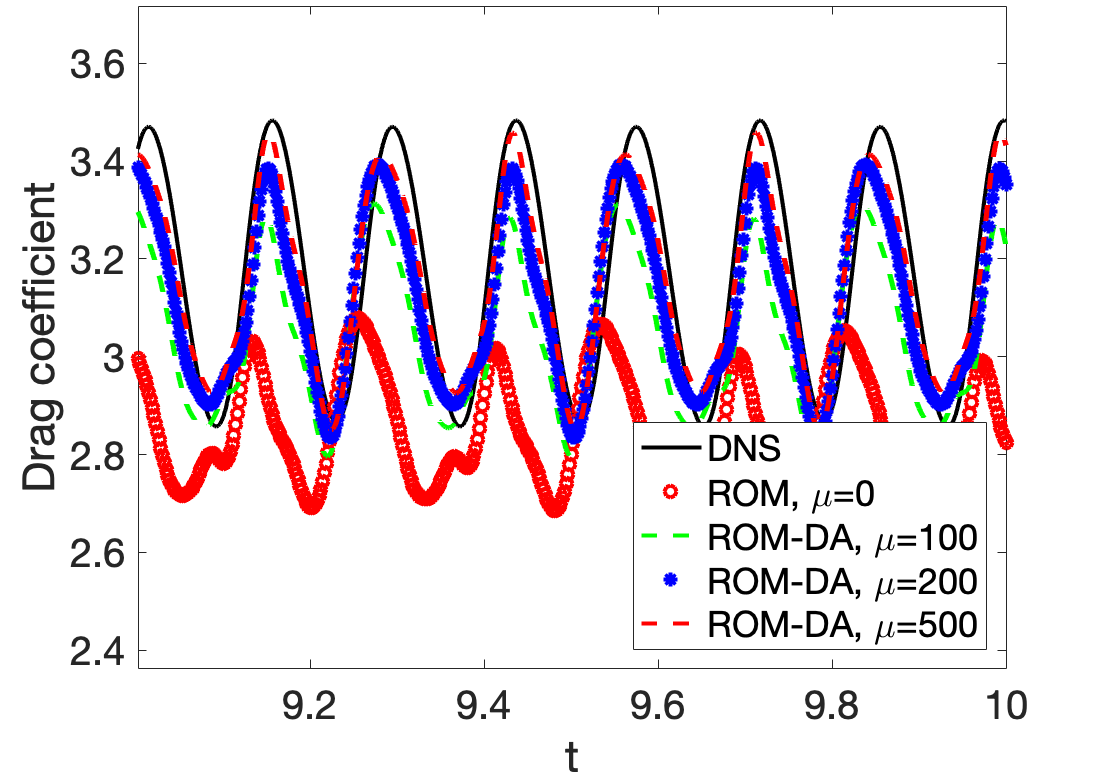}	
	\caption{\label{N9_re500_bb_plots} Energy and drag coefficient versus time plots with different values of $\mu$ for $Re=500$ using 8 modes and $H=\frac{2.2}{20}$. }
\end{figure}

\begin{figure}[!ht]
	\centering
	\hspace{0in} Basis 1 \\
	\includegraphics[width = .68\textwidth, height = 1.42in]{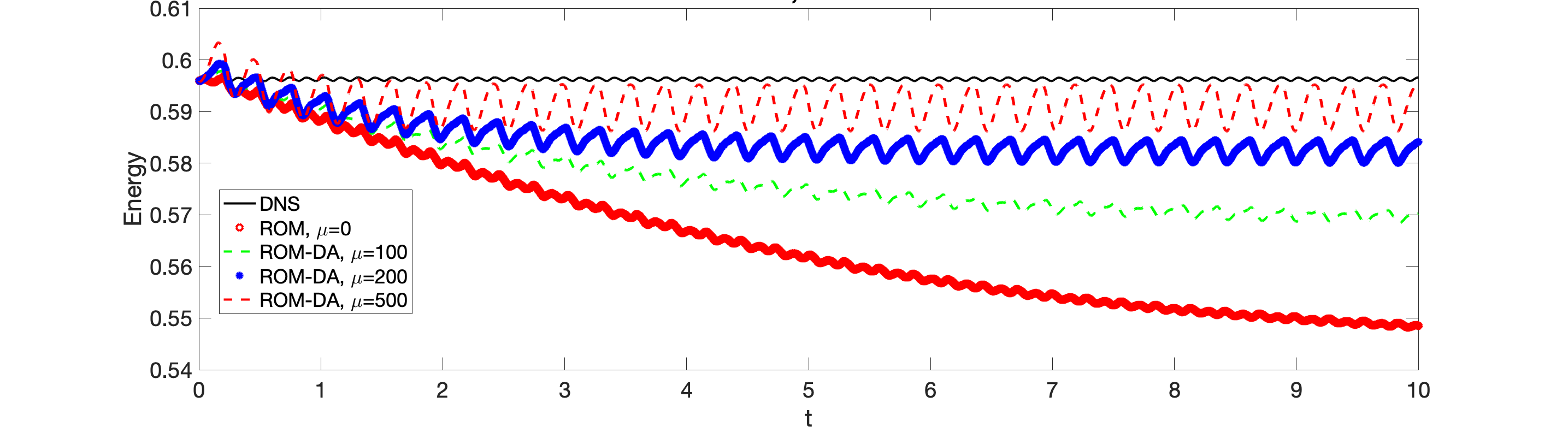}	
	\includegraphics[width = .31\textwidth]{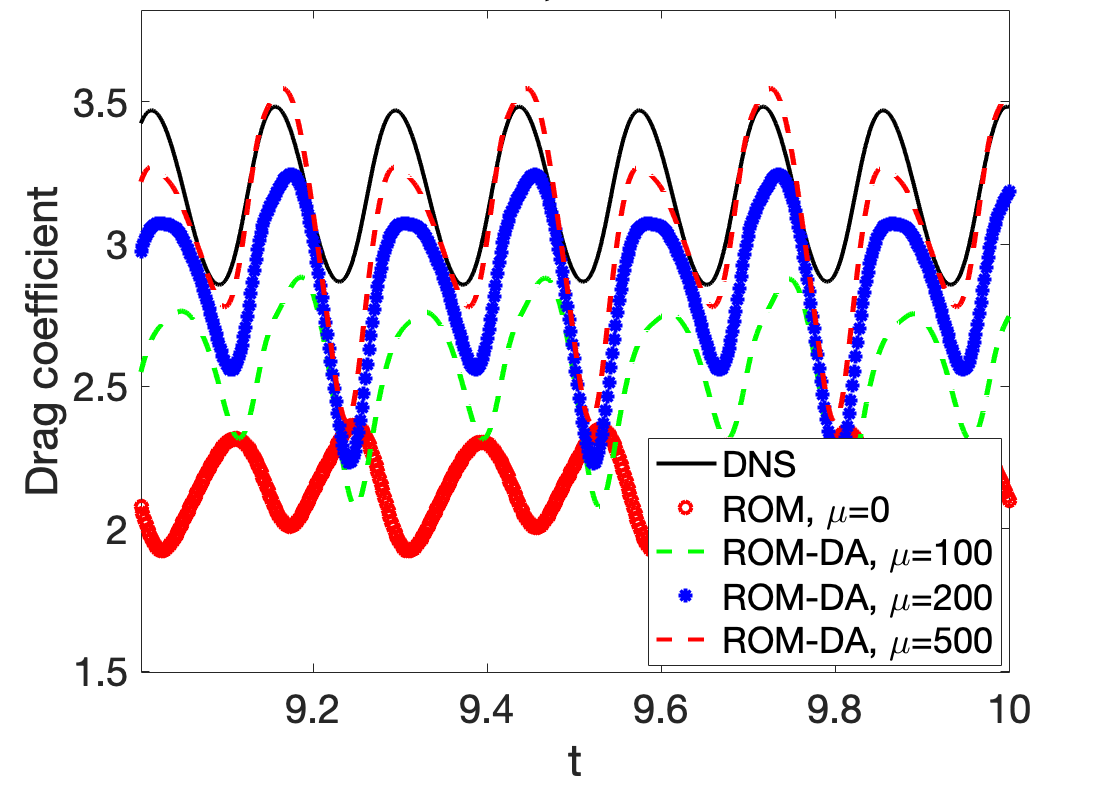} \\  
	\hspace{0in} Basis 2 \\
	\includegraphics[width = 0.68\textwidth, height = 1.42in]{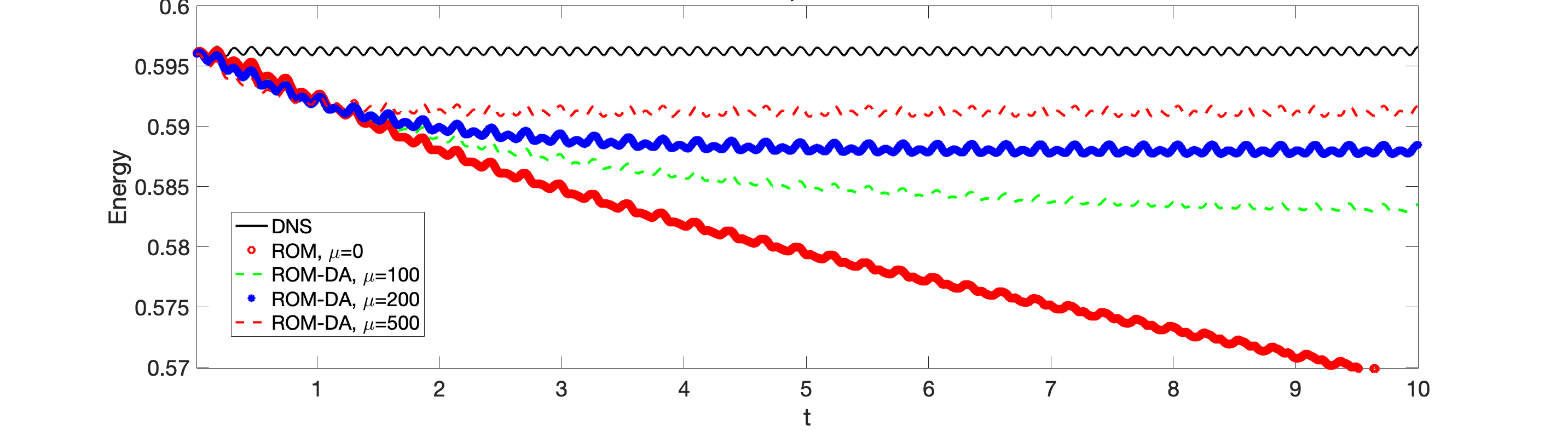}  	
	\includegraphics[width = .31\textwidth]{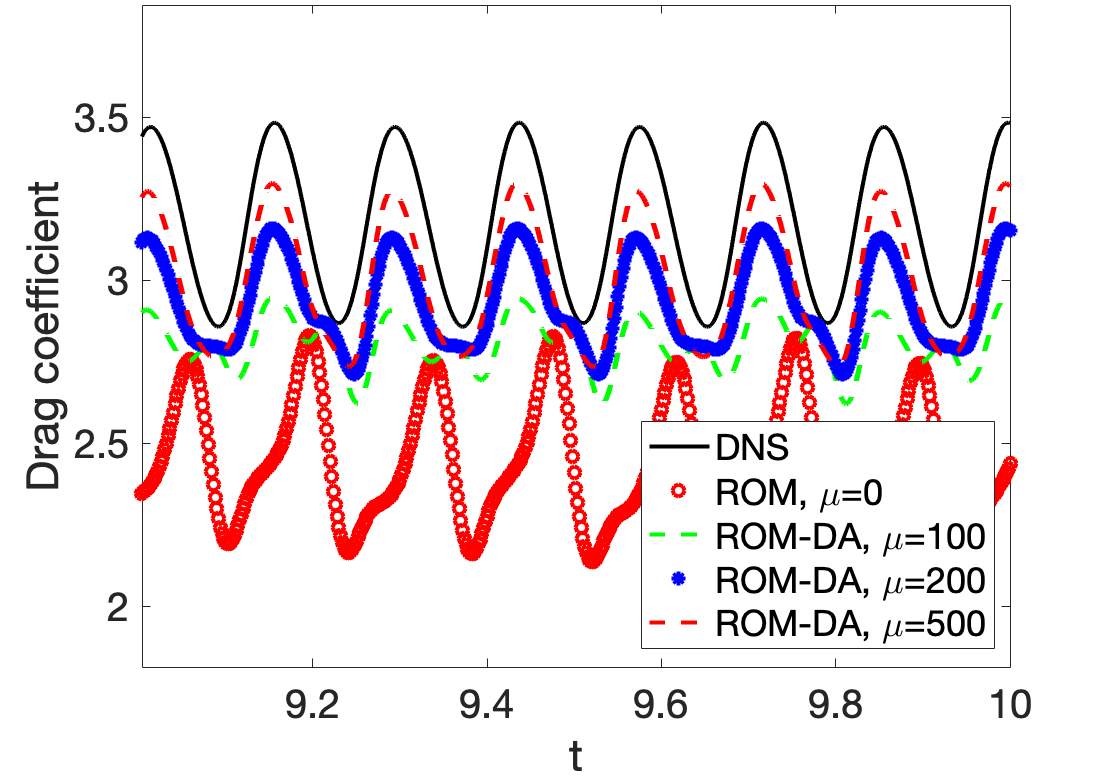}	
	\caption{\label{N13_re500_bb_plots} Energy and drag coefficient versus time plots with different values of $\mu$ for $Re=500$ using 12 modes and $H=\frac{2.2}{20}$. }
\end{figure}

We generated these inaccurate snapshots for $Re$=500 using less than one period of data: basis 1 used 64\% of one period of data while basis 2 used 84\%. See figure \ref{basisfncs} for the first five basis functions generated by the ROM; the basis functions for the full ROM are also included for comparison. 

In figure \ref{N9_re500_bb_plots}, we show the results of the DA-ROM using only 8 modes, with basis 1 and 2 defined above, and $\mu$ ranging from 100 to 500. 
DA significantly improves the accuracy of the ROM, and basis 2 does better at predicting the drag coefficient than basis 1. 

Figure \ref{N13_re500_bb_plots} shows energy and drag coefficient plots versus time using $N=12$ modes, and the nudging parameter $\mu$ is varied from 100 to 500. 
We see similar results as the case of using 8 modes; for both bases, DA significantly improves the accuracy of the ROM, compared to the ROM without DA ($\mu=0$), which becomes more and more inaccurate as time goes on. 
Basis 2 is able to very accurately predict the drag coefficient. 

The results in this section suggest that DA can dramatically improve the accuracy of a ROM when insufficient data is available to build the ROM, which is the general case in practical applications. 
We also emphasize that the improvement in the DA-ROM accuracy over the standard ROM accuracy is significantly larger in the realistic case of inaccurate snapshot construction.  
Indeed, comparing figures~\ref{N9_re500_bb_plots} and~\ref{N13_re500_bb_plots} with figure~\ref{N8_re500_plots}, we notice that the absolute improvement in the DA-ROM is much larger in the former than in the latter (this could be clearly seen from the magnitude of the $y$-axis).


	\subsection{Adaptive Nudging}
	\label{sec:variable-param}
	
To further improve the accuracy of the DA-ROM solution, we also consider nudging that is adaptive in time. 
While the error estimate we prove guarantees convergence up to discretization error and ROM truncation error exponentially fast in time, it may not be sufficient to expect good numerical results.  In practice, the ROM truncation error is often quite large, and can make the error bounds be too large to guarantee accurate predictions, especially over long time intervals. We propose below an adaptive nudging technique that will help produce better results by forcing the DA-ROM predicted energy to be more accurate.	
	 
\subsubsection{Algorithm}

In this section, we propose to change $\mu$ adaptively in time, based on the accuracy of the energy prediction of the ROM as well as the sign of the contribution of the data assimilation term to the energy balance. The semi-discrete algorithm reads: Find $u_r \in X_r$ such that for all $v_r \in X_r$, 
\begin{align}
((u_r)_t , v_r) + b^*(u_r, u_r, v_r) + \nu (\nabla u_r, \nabla v_r)  
 +  \mu (\Plh (u_r- u), \Plh v_r) = (f, v_r), \label{adapDAROMalg}
\end{align} 
with $v_0=P_r(u_0)$, and $\mu$ is the adaptive nudging parameter.  

  We begin the discussion with an energy estimate.  Choosing $\chi_r=u_r$ vanishes the nonlinear term, and after bounding the forcing term in the usual way we obtain the energy estimate
\[
\frac{d}{dt} \| u_r \|^2 + \nu \| \nabla u_r \|^2 + \mu \left( \| I_H(u_r) \|^2 - \| I_H(u) \|^2 + \| I_H(u_r - u ) \|^2 \right) \le \nu^{-1} \| f \|_{-1}.
\]
We assume this estimate is sharp in the following analysis, and that we know $\| u(t^n) \|$ in addition to $I_H(u)(t^n)$.

The adaptive strategy is to adjust $\mu$ so the contribution of the data assimilation term removes dissipation if the ROM-DA energy is too small, and adds dissipation if the energy is too large.  We use the term dissipation loosely, since here we refer to dissipation from the DA term only meaning that it adds positivity to the left hand side of the energy estimate.  Now after step $n$ we can calculate (1) the DA-ROM energy $\frac12 \| u_r^n \|^2$ and the true energy $\frac12 \| u(t^n) \|^2$; and (2) the sign of the contribution of the data assimilation term (DAT):
\[DAT:=  \| I_H(u_r^n) \|^2 - \| I_H(u)(t^n) \|^2 + \| I_H(u_r - u )(t^n) \|^2 .\] 
With this information, we check the energy error to see if it is too high (or too low), and if so, then add dissipation by increasing $\mu$ if $DAT>0$ and decreasing $\mu$ otherwise; or do the opposite to decrease dissipation.

How often to adjust $\mu$, and by how much each time, are interesting questions.  In our numerical tests below, we checked the value of $DAT$ every 10 time steps, since there is some calculation cost involved, and changed $\mu$ by $\pm 1$ each time, as large sudden changes in $\mu$ gave bad results.

\subsubsection{Numerical Results}

We follow the same problem set up outlined in Section 4.2 (again using the full ROM basis), but now choosing $\mu$ adaptively in time. 
We note that, in addition to the Reynolds number we considered in the previous numerical experiments (i.e., $Re=500$), we also consider $Re=1000$.
Figures \ref{N8_re500_adap_plots} and \ref{difference1000} show the energy and drag plots for the DA-ROM algorithm with the adaptive nudging described above, and for constant $\mu$, for no DA.  For both $Re$,  the adaptive DA-ROM yields  the most accurate results, outperforming the ROM without DA, and the DA-ROM with a constant $\mu$. Also included are plots of the $\mu$ values chosen by the algorithm at each timestep. We observe that the behavior of the values of $\mu$ is similar to that of the plots of $DAT$ in the figures. 

\begin{figure}[!ht]
	\centering
	\includegraphics[width = 0.68\textwidth, height = 1.63in]{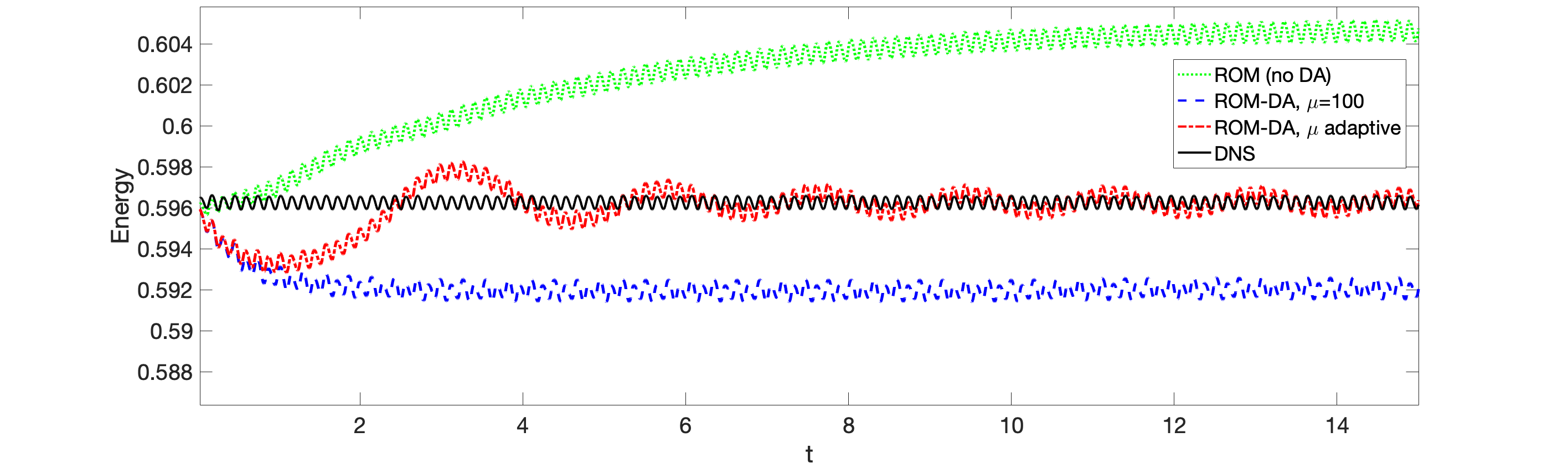}	 
	\includegraphics[width = .3\textwidth]{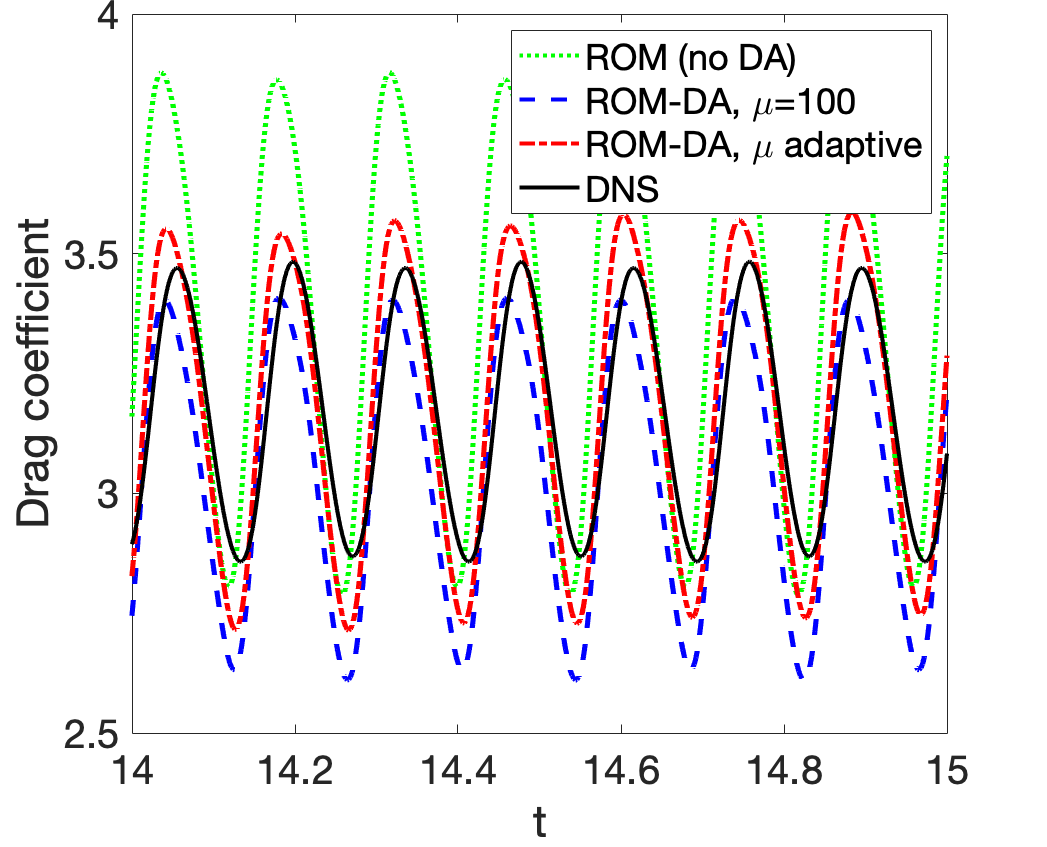}	 \\
	\includegraphics[width = .4\textwidth]{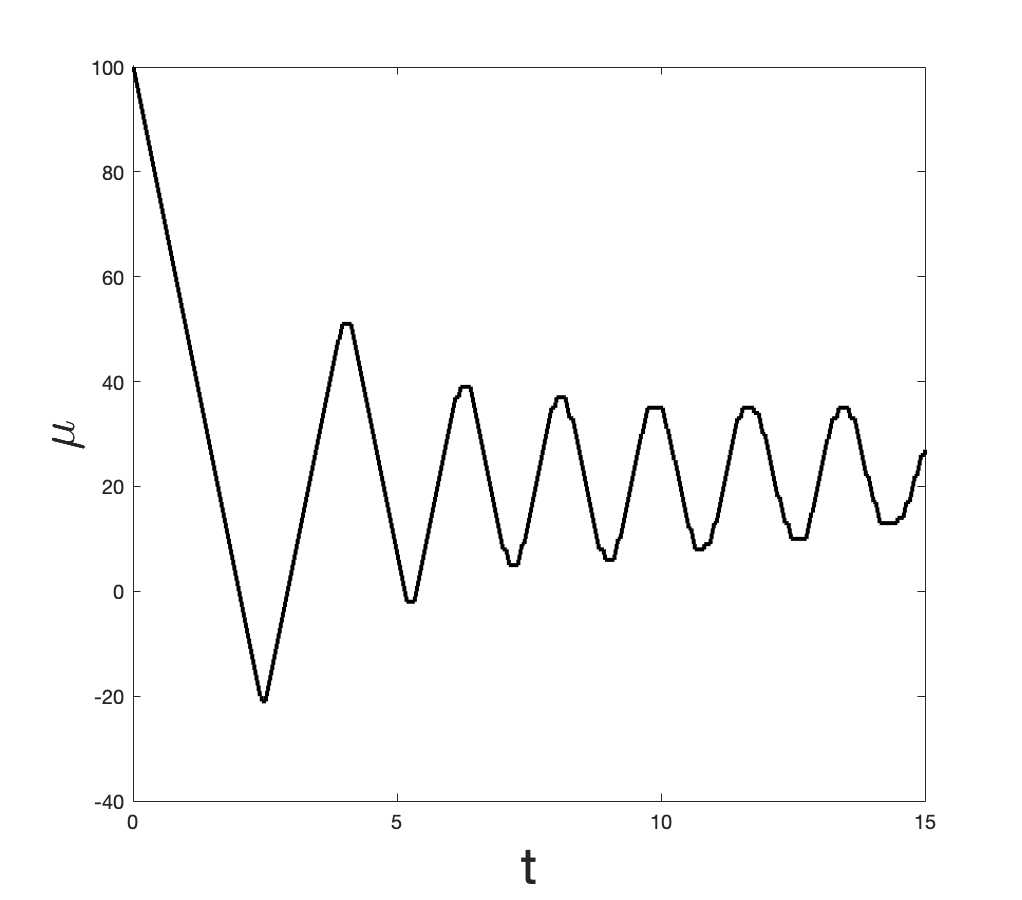}	
	\includegraphics[width = .4\textwidth]{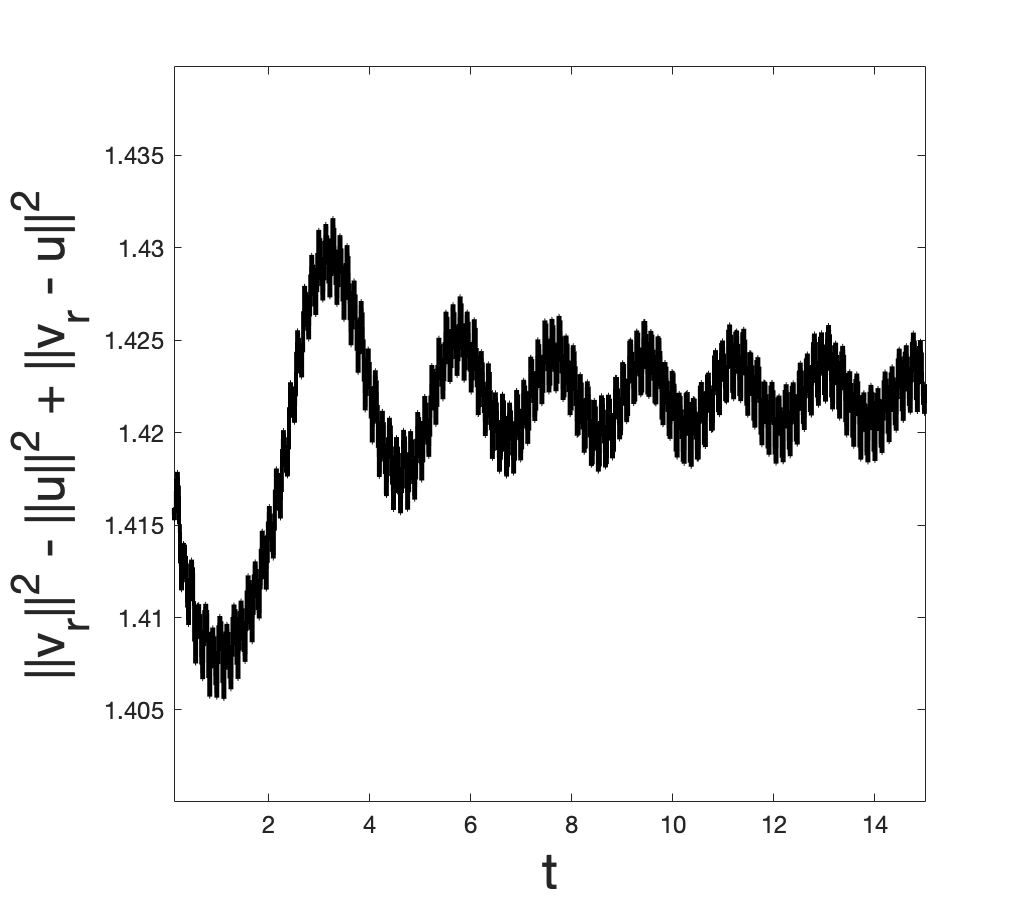}	
	\caption{\label{N8_re500_adap_plots} Energy and drag coefficient versus time for  $Re=500$ DA-ROM with different choices of $\mu$, with $N=8$ modes and $H=\frac{2.2}{20}$. Also included are the optimal choices of $\mu$ and the energy terms versus time, for the adaptive $\mu$ simulation.}
\end{figure}
\begin{figure}[!ht]
	\centering
	\includegraphics[width = .68\textwidth,height=.3\textwidth]{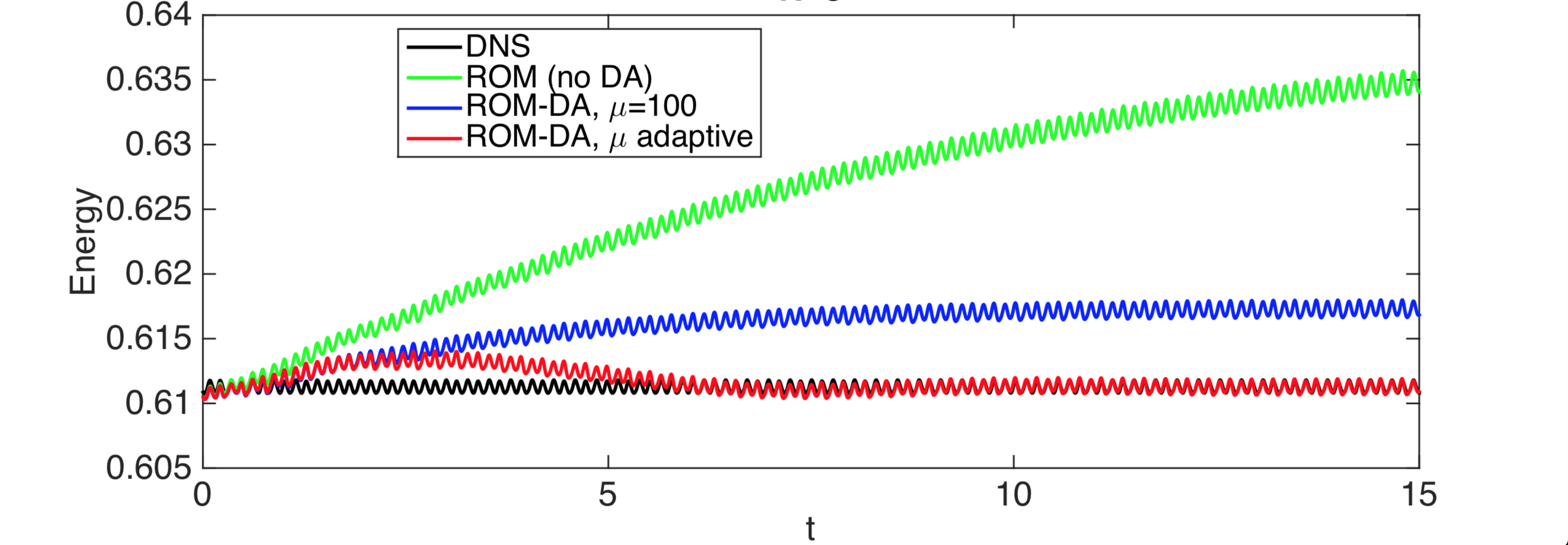}
	\includegraphics[width = .3\textwidth,height=.3\textwidth]{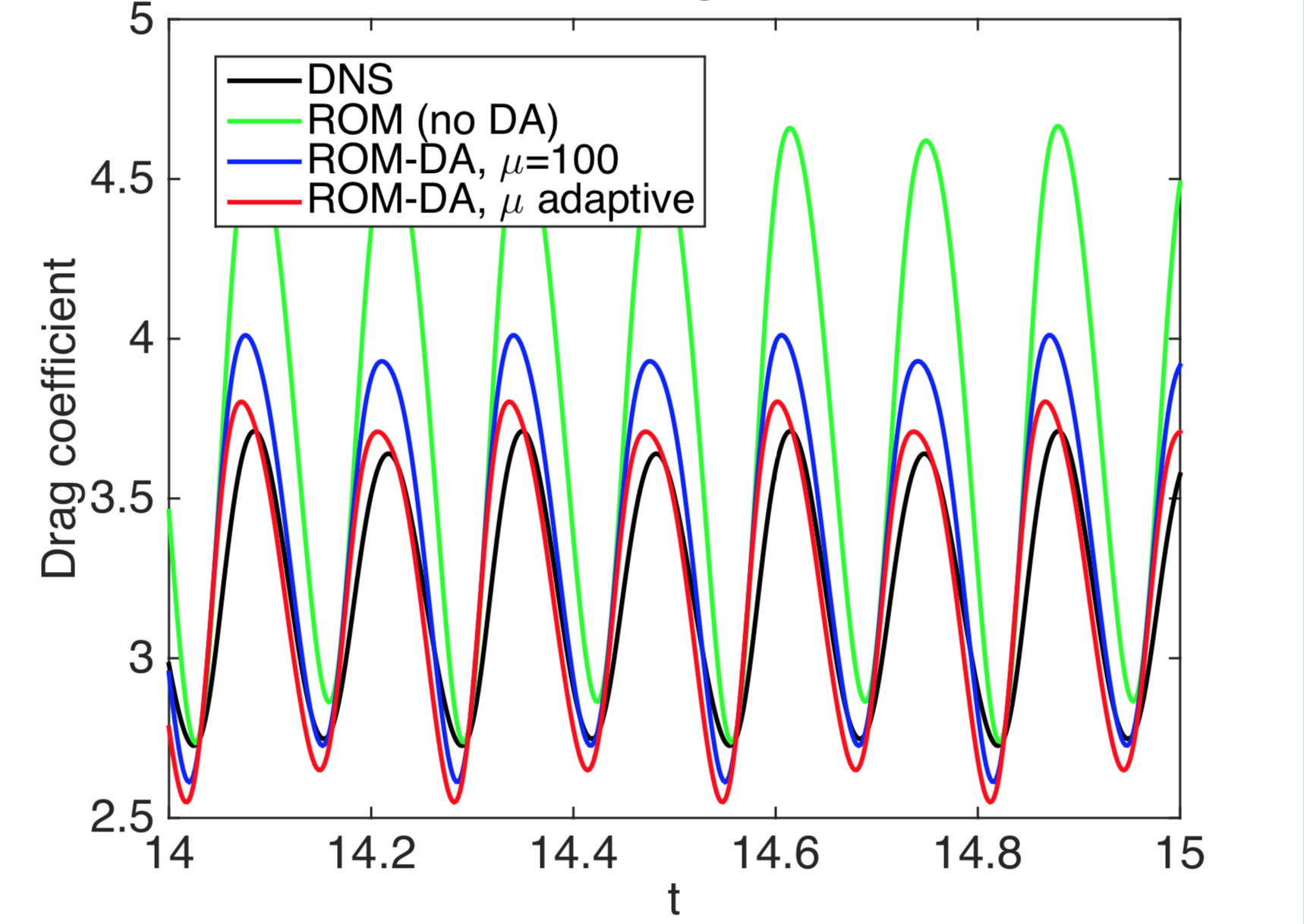}	\\
	\includegraphics[width = .4\textwidth,height=.3\textwidth]{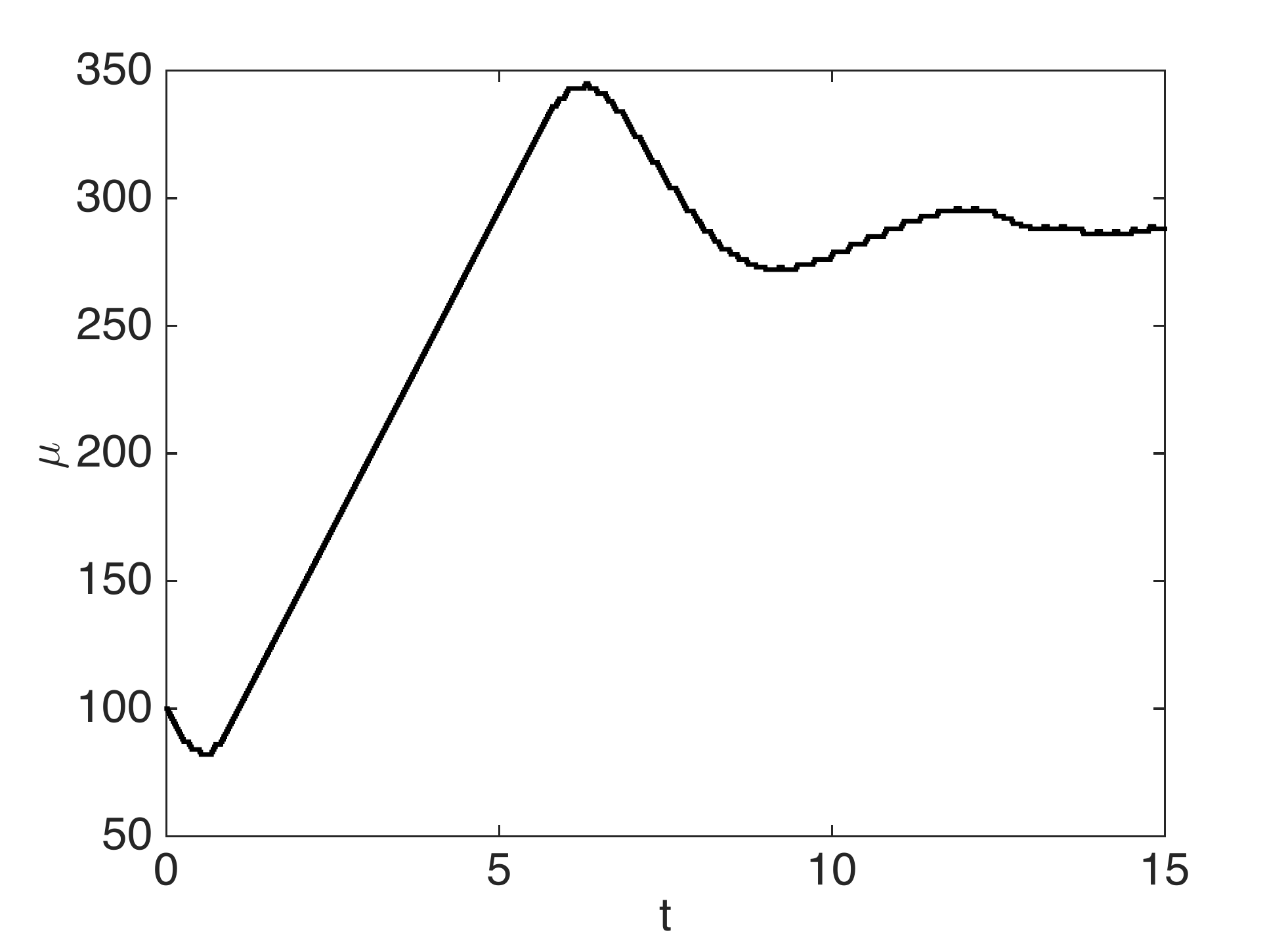}	
	\includegraphics[width = .4\textwidth,height=.3\textwidth]{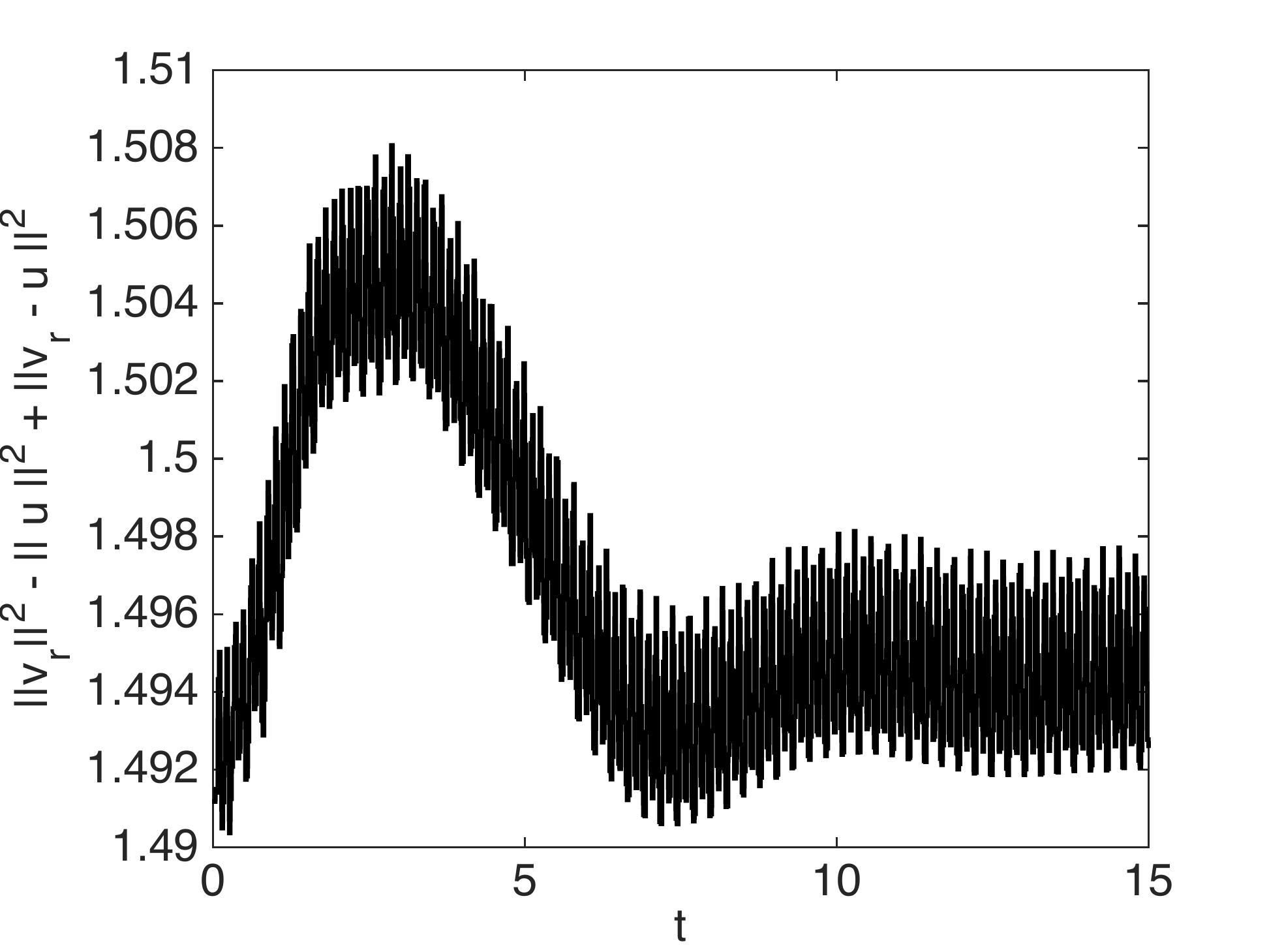}		
	\caption{\label{difference1000} Energy and drag coefficients versus time for  $Re=1000$ DA-ROM with different choices of $\mu$, with $N=8$ modes and $H=\frac{2.2}{20}$.  Shown at the bottom is $\mu$ and the contribution of the DA term versus time, for the adaptive $\mu$ simulation.}
\end{figure}

	\section{Conclusions}
	\label{sec:conclusions}
	
	In this paper, we put forth a new data assimilation reduced order model (DA-ROM) for fluid flows.
	The new DA-ROM adds to the standard ROM a feedback control term that nudges the ROM approximation towards the reference solution corresponding to the observed data.
The new DA-ROM's implementation is extremely simple:
The nudging term can be implemented into existing codes completely at the linear algebraic level, without any changes to the rest of the discretization.
	The nudging term dramatically increases the accuracy of the new DA-ROM by utilizing the available low-resolution data, without the need to increase the number of ROM basis functions.
	We proved that with a properly chosen nudging parameter, the new DA-ROM algorithm converges exponentially fast in time to the true solution, up to discretization and ROM truncation errors.  
	We also proposed a strategy for nudging adaptively in time, by adding or removing dissipation arising from the nudging to better match true solution energy. 
	Finally, we performed a numerical investigation of the new DA-ROM in the simulation of a 2D flow past a circular cylinder.
	The numerical results showed that the adaptive nudging DA-ROM significantly improves the long time ROM accuracy, especially when the snapshots used to construct the ROM are inaccurate, which is generally the case in realistic applications. 
	
We intend to pursue several research avenues.
First, we will investigate whether numerical analysis can help determine the optimal parameter in the adaptive nudging approach for the new DA-ROM. 
We also want to extend the numerical investigation of the DA-ROM to complex 3D flows.
Finally, we will examine whether using a spectral type of nudging in the DA-ROM instead of the current physical nudging yields better results.


	\bibliographystyle{abbrv}
	\bibliography{references,LariosBiblio,traian}

\begin{thebibliography}{10}

\bibitem{Azouani_Olson_Titi_2014}
A.~Azouani, E.~Olson, and E.~S. Titi.
\newblock Continuous data assimilation using general interpolant observables.
\newblock {\em J. Nonlinear Sci.}, 24(2):277--304, 2014.

\bibitem{balajewicz2016minimal}
M.~J. Balajewicz, I.~Tezaur, and E.~H. Dowell.
\newblock {Minimal subspace rotation on the Stiefel manifold for stabilization
  and enhancement of projection-based reduced order models for the compressible
  Navier--Stokes equations}.
\newblock {\em J. Comput. Phys.}, 321:224--241, 2016.

\bibitem{benosman2017learning}
M.~Benosman, J.~Borggaard, O.~San, and B.~Kramer.
\newblock {Learning-based robust stabilization for reduced-order models of 2D
  and 3D Boussinesq equations}.
\newblock {\em Appl. Math. Model.}, 49:162--181, 2017.

\bibitem{Bessaih_Olson_Titi_2015}
H.~Bessaih, E.~Olson, and E.~S. Titi.
\newblock Continuous data assimilation with stochastically noisy data.
\newblock {\em Nonlinearity}, 28(3):729--753, 2015.

\bibitem{Biswas_Martinez_2017}
A.~Biswas and V.~R. Martinez.
\newblock Higher-order synchronization for a data assimilation algorithm for
  the 2{D} {N}avier--{S}tokes equations.
\newblock {\em Nonlinear Anal. Real World Appl.}, 35:132--157, 2017.

\bibitem{cao2007reduced}
Y.~Cao, J.~Zhu, I.~M. Navon, and Z.~Luo.
\newblock A reduced-order approach to four-dimensional variational data
  assimilation using proper orthogonal decomposition.
\newblock {\em Int. J. Numer. Meth. Fluids}, 53(10):1571--1583, 2007.

\bibitem{carlberg2017galerkin}
K.~Carlberg, M.~Barone, and H.~Antil.
\newblock {Galerkin v. least-squares Petrov--Galerkin projection in nonlinear
  model reduction}.
\newblock {\em J. Comput. Phys.}, 330:693--734, 2017.

\bibitem{Farhat_Jolly_Titi_2015}
A.~Farhat, M.~S. Jolly, and E.~S. Titi.
\newblock Continuous data assimilation for the 2{D} {B}\'enard convection
  through velocity measurements alone.
\newblock {\em Phys. D}, 303:59--66, 2015.

\bibitem{Farhat_Lunasin_Titi_2016abridged}
A.~Farhat, E.~Lunasin, and E.~S. Titi.
\newblock Abridged continuous data assimilation for the 2{D} {N}avier--{S}tokes
  equations utilizing measurements of only one component of the velocity field.
\newblock {\em J. Math. Fluid Mech.}, 18(1):1--23, 2016.

\bibitem{Foias_Mondaini_Titi_2016}
C.~Foias, C.~F. Mondaini, and E.~S. Titi.
\newblock A discrete data assimilation scheme for the solutions of the
  two-dimensional {N}avier-{S}tokes equations and their statistics.
\newblock {\em SIAM J. Appl. Dyn. Syst.}, 15(4):2109--2142, 2016.

\bibitem{galletti2004low}
B.~Galletti, C.~H. Bruneau, L.~Zannetti, and A.~Iollo.
\newblock Low-order modelling of laminar flow regimes past a confined square
  cylinder.
\newblock {\em J. Fluid Mech.}, 503:161--170, 2004.

\bibitem{G99}
J.~L. Guermond.
\newblock Stabilization of {Galerkin} approximations of transport equations by
  subgrid modeling.
\newblock {\em M2AN, Math. Model. Numer. Anal.}, 33(6):1293--1316, 1999.

\bibitem{gunzburger2018leray}
M.~Gunzburger, T.~Iliescu, and M.~Schneier.
\newblock {A Leray regularized ensemble-proper orthogonal decomposition method
  for parameterized convection-dominated flows}.
\newblock {\em IMA J. Numer. Anal.}, 01 2019.

\bibitem{hesthaven2015certified}
J.~S. Hesthaven, G.~Rozza, and B.~Stamm.
\newblock {\em Certified Reduced Basis Methods for Parametrized Partial
  Differential Equations}.
\newblock Springer, 2015.

\bibitem{HLB96}
P.~Holmes, J.~L. Lumley, and G.~Berkooz.
\newblock {\em Turbulence, Coherent Structures, Dynamical Systems and
  Symmetry}.
\newblock Cambridge, 1996.

\bibitem{iliescu2014variational}
T.~Iliescu and Z.~Wang.
\newblock Variational multiscale proper orthogonal decomposition:
  {N}avier-{S}tokes equations.
\newblock {\em Num. Meth. P.D.E.s}, 30(2):641--663, 2014.

\bibitem{J04}
V.~John.
\newblock Reference values for drag and lift of a two dimensional
  time-dependent flow around a cylinder.
\newblock {\em Int. J. Numer. Methods Fluids}, 44:777--788, 2002.

\bibitem{Jolly_Martinez_Titi_2017}
M.~S. Jolly, V.~R. Martinez, and E.~S. Titi.
\newblock A data assimilation algorithm for the subcritical surface
  quasi-geostrophic equation.
\newblock {\em Adv. Nonlinear Stud.}, 17(1):167--192, 2017.

\bibitem{kaercher2018reduced}
M.~Kaercher, S.~Boyaval, M.~A. Grepl, and K.~Veroy.
\newblock Reduced basis approximation and a posteriori error bounds for 4d-var
  data assimilation.
\newblock {\em Optim. Eng.}, pages 1--33, 2018.

\bibitem{kalnay2003atmospheric}
E.~Kalnay.
\newblock {\em {Atmospheric modeling, data assimilation, and predictability}}.
\newblock Cambridge Univ Pr, 2003.

\bibitem{KV01}
K.~Kunisch and S.~Volkwein.
\newblock Galerkin proper orthogonal decomposition methods for parabolic
  problems.
\newblock {\em Numer. Math.}, 90(1):117--148, 2001.

\bibitem{Larios_Pei_2017_KSE_DA_NL}
A.~Larios and Y.~Pei.
\newblock Nonlinear continuous data assimilation.
\newblock (submitted) arXiv:1703.03546.

\bibitem{LRZ18}
A.~Larios, L.~Rebholz, and C.~Zerfas.
\newblock Global in time stability and accuracy of {IMEX-FEM} data assimilation
  schemes for {Navier-Stokes} equations.
\newblock {\em Comput. Methods Appl. Mech. Engrg.,}, 345:1077--1093, 2019.

\bibitem{layton2008introduction}
W.~J. Layton.
\newblock {\em Introduction to the numerical analysis of incompressible viscous
  flows}, volume~6.
\newblock Society for Industrial and Applied Mathematics (SIAM), 2008.

\bibitem{maday2015parameterized}
Y.~Maday, A.~T. Patera, J.~D. Penn, and M.~Yano.
\newblock A parameterized-background data-weak approach to variational data
  assimilation: formulation, analysis, and application to acoustics.
\newblock {\em Int. J. Num. Meth. Engng.}, 102(5):933--965, 2015.

\bibitem{Markowich_Titi_Trabelsi_2016}
P.~A. Markowich, E.~S. Titi, and S.~Trabelsi.
\newblock Continuous data assimilation for the three-dimensional
  {B}rinkman--{F}orchheimer-extended {D}arcy model.
\newblock {\em Nonlinearity}, 29(4):1292, 2016.

\bibitem{mohebujjaman2019physically}
M.~Mohebujjaman, L.~G. Rebholz, and T.~Iliescu.
\newblock Physically-constrained data-driven correction for reduced order
  modeling of fluid flows.
\newblock {\em Int. J. Num. Meth. Fluids}, 89(3):103--122, 2019.

\bibitem{mohebujjaman2017energy}
M.~Mohebujjaman, L.~G. Rebholz, X.~Xie, and T.~Iliescu.
\newblock Energy balance and mass conservation in reduced order models of fluid
  flows.
\newblock {\em J. Comput. Phys.}, 346:262--277, 2017.

\bibitem{osth2014need}
J.~{\"O}sth, B.~R. Noack, S.~Krajnovi{\'c}, D.~Barros, and J.~Bor{\'e}e.
\newblock On the need for a nonlinear subscale turbulence term in {POD} models
  as exemplified for a high-{R}eynolds-number flow over an {A}hmed body.
\newblock {\em J. Fluid Mech.}, 747:518--544, 2014.

\bibitem{peherstorfer2016data}
B.~Peherstorfer and K.~Willcox.
\newblock Data-driven operator inference for nonintrusive projection-based
  model reduction.
\newblock {\em Comput. Methods Appl. Mech. Engrg.}, 306:196--215, 2016.

\bibitem{P97}
A.~Prohl.
\newblock {\em Projection and quasi-compressibility methods for solving the
  incompressible {Navier-Stokes} equations}.
\newblock Teubner-Verlag, Stuttgart, 1997.

\bibitem{protas2015optimal}
B.~Protas, B.~R. Noack, and J.~{\"O}sth.
\newblock Optimal nonlinear eddy viscosity in {G}alerkin models of turbulent
  flows.
\newblock {\em J. Fluid Mech.}, 766:337--367, 2015.

\bibitem{quarteroni2015reduced}
A.~Quarteroni, A.~Manzoni, and F.~Negri.
\newblock {\em Reduced Basis Methods for Partial Differential Equations: An
  Introduction}, volume~92.
\newblock Springer, 2015.

\bibitem{RX18}
L.~G. {Rebholz} and C.~{Zerfas}.
\newblock {Simple and efficient continuous data assimilation of evolution
  equations via algebraic nudging}.
\newblock {\em arXiv e-prints}, page arXiv:1810.03512, Oct. 2018.

\bibitem{rowley2004model}
C.~W. Rowley, T.~Colonius, and R.~M. Murray.
\newblock Model reduction for compressible flows using {POD} and {G}alerkin
  projection.
\newblock {\em Phys. D}, 189(1):115--129, 2004.

\bibitem{san2015principal}
O.~San and J.~Borggaard.
\newblock {Principal interval decomposition framework for POD reduced-order
  modeling of convective Boussinesq flows}.
\newblock {\em Int. J. Num. Meth. Fluids}, 78(1):37--62, 2015.

\bibitem{san2015stabilized}
O.~San and T.~Iliescu.
\newblock A stabilized proper orthogonal decomposition reduced-order model for
  large scale quasigeostrophic ocean circulation.
\newblock {\em Adv. Comput. Math.}, pages 1289--1319, 2015.

\bibitem{ST96}
M.~Sch\"afer and S.~Turek.
\newblock The benchmark problem `flow around a cylinder' flow simulation with
  high performance computer {II}.
\newblock {\em Notes on Numerical Fluid Mechanics}, 52:547--566, 1996.

\bibitem{stefanescu2015pod}
R.~{\c{S}}tef{\u{a}}nescu, A.~Sandu, and I.~M. Navon.
\newblock {POD/DEIM reduced-order strategies for efficient four dimensional
  variational data assimilation}.
\newblock {\em J. Comput. Phys.}, 295:569--595, 2015.

\bibitem{XMRT18}
X.~Xie, M.~Mohebujjaman, L.~Rebholz, and T.~Iliescu.
\newblock Data-driven filtered reduced order modeling of fluid flows.
\newblock {\em SIAM Journal on Scientific Computing}, 40(3):B834--B857, 2018.

\bibitem{xie2018lagrangian}
X.~Xie, P.~J. Nolan, S.~D. Ross, and T.~Iliescu.
\newblock Lagrangian data-driven reduced order modeling of finite time
  {L}yapunov exponents.
\newblock 2018.
\newblock available as arXiv preprint, \url{http://arxiv.org/abs/1808.05635}.

\end{thebibliography}

\end{document}